\newcommand{\R}{\mathbb{R}}
\newcommand{\E}{\mathbb{E}}
\newcommand{\change}[1]{\textcolor{black}{#1}}
\newtheorem{theorem}{Theorem}[section]
\newtheorem{corollary}[theorem]{Corollary}
\newtheorem{remark}[theorem]{Remark}
\newtheorem{definition}[theorem]{Definition}
\begin{document}

\begin{frontmatter}

\title{On the choice of the splitting ratio for the split likelihood ratio test}
\runtitle{Asymptotic theory of the split likelihood ratio test}


\author{\fnms{David} \snm{Strieder}\corref{}\ead[label=e1]{david.strieder@tum.de}}
\and
\author{\fnms{Mathias} \snm{Drton}\ead[label=e2]{mathias.drton@tum.de}}
\address{Technical University of Munich; TUM School of Computation, Information and Technology, \\
	Munich Center for Machine Learning (MCML), Munich Data Science Institute (MDSI)  \\
	\printead{e1,e2}}

\runauthor{D. Strieder and M. Drton}

\begin{abstract}
The recently introduced framework of universal inference provides a new approach to constructing hypothesis tests and confidence regions that are valid in finite samples and do not rely on any specific regularity assumptions on the underlying statistical model.  At the core of the methodology is a split likelihood ratio statistic, which is formed under data splitting and compared to a cleverly selected universal critical value.  As this critical value can be very conservative, it is interesting to mitigate the potential loss of power by careful choice of the ratio according to which data are split.  Motivated by this problem, we study the split likelihood ratio test under local alternatives and introduce the resulting class of noncentral split chi-square distributions.  We investigate the properties of this new class of distributions and use it to numerically examine and propose an optimal choice of the data splitting ratio for tests of composite hypotheses of different dimensions.
\end{abstract}


\begin{keyword}
\kwd{chi-square distribution}
\kwd{likelihood ratio test}
\kwd{local alternatives}
\kwd{universal inference}
\end{keyword}


\tableofcontents

\end{frontmatter}

\section{Introduction}

Likelihood ratio tests provide powerful solutions to a broad range of hypothesis testing problems.  However, their implementation generally relies on asymptotic approximations whose validity requires the underlying statistical models to satisfy a number of regularity conditions.  When these conditions are not met, the needed distribution-theoretic insights may be difficult to obtain; see, e.g., \cite{drton:2009,hartigan:1985}.  
Recent work of \citet{universal} addresses this challenge by providing a \textit{split likelihood ratio test} that is universally applicable to problems with i.i.d.~samples.  In this \textit{universal inference} methodology, the data are split into two parts:  one part is used to form a maximum likelihood estimate of a distribution under the full model, and the remaining data are used to compare \change{the} likelihood under the estimate versus the null hypothesis.  Crucially, the independence of the split data allows one to apply a universal critical value, which merely depends on the chosen significance level and is guaranteed to be conservative even for finite samples.  The resulting methodology makes it possible to conduct rather simple analyses of complicated composite hypotheses.  For example, \citet{uai} recently used the approach to construct hypothesis tests for causal effects in a setting with unknown causal structure. 
	
The initial work in \cite{universal} and the follow-up paper by \citet{universal2} investigate the performance of the universal inference framework in the Gaussian case and under consideration of point hypotheses/construction of confidence regions.  Unsurprisingly, the universal framework is rather conservative. To cite the authors: ``our methods may not be optimal, though we do not yet fully understand how close to optimal they are beyond special cases (uniform, Gaussian).''  \change{During the review period, the independent work of \citet{tsedavison} was announced to appear, which covers different aspects of related problems.}

The goal of the present paper is to expand our insights about the behavior of the split likelihood ratio test, as introduced in more detail in Section~\ref{background}.  In particular, we seek to shed light on the impact of the dimensionality of the tested null and alternative hypotheses.  To this end, we study the case of smooth hypotheses \change{ in regular parametric models that are differentiable in quadratic mean. Under similar conditions, \citet{universal} studied in their initial work the diameter of confidence sets corresponding to the inverted split likelihood ratio tests. We extend our insights about the limit behaviour of the split likelihood ratio test by deriving precise limit theory and calculating} the large-sample asymptotic distribution allowing for local alternatives.  This distribution belongs to a ``split-version'' of noncentral chi-square distributions, for which moments may be derived explicitly.  We then use this new class of \textit{noncentral split chi-square distributions} to propose a new routine for calculating the optimal splitting ratio for the split likelihood ratio test based on the dimensionality of the tested null and alternative hypotheses (Section~\ref{limitsection}). Furthermore,  we use this new class of distributions to conduct numerical experiments that analyze the power and the optimal choice of the splitting ratio for the split likelihood ratio (Section~\ref{simulations}).  The simulations suggest, in particular, that while in lower dimensional settings an even split performs well, in higher dimensions a lower splitting ratio is  advantageous and our proposed new splitting ratio significantly improves power. \change{In an experiment with factor analysis models, we demonstrate that our proposal may also lead to a power gain in irregular settings}.  This and other findings are further discussed in the concluding Section~\ref{discussion}.

\paragraph*{Notation.} In the remainder, the symbols $\overset{P}{\rightarrow}$, $\overset{a.s.}{\longrightarrow}$ and $\overset{\mathcal{D}}{\rightarrow}$ stand for convergence in probability, almost sure convergence and convergence in distribution, respectively.  The stochastic Landau symbol $o_P(1)$ indicates convergence to zero in probability.  If not stated otherwise, the limits refer to $n \rightarrow \infty$. \change{With $\mathrm{Id}$ we denote the $d\times d$ identity matrix, and  $\mathcal{N}_d(0,\mathrm{Id})$ is the standard normal distribution in $\mathbb{R}^d$. For a given vector $X \in \R^d$ we denote the vector of its first $p$ components by $X_{[p]}$.}

\section{Background on the split likelihood ratio test}\label{background}


Let $\{P_{\theta}: \theta \in \Theta\}$ be a given (parametric) statistical model, with parameter space \change{$\Theta \subset \R^d$.}
The distributions $P_{\theta}$ are assumed to be dominated by a measure $\mu$, and we write $p_{\theta}$ for the $\mu$-density of $P_{\theta}$.  Given an i.i.d.~sample $X_1,\dots,X_n$ from an unknown distribution $P_{\theta}$ in the model, we are interested in testing 
\begin{equation}
\label{eq:testing}
H_0: \theta \in \Theta_0 \quad\text{versus}\quad H_1:\theta\in\Theta\setminus\Theta_0
\end{equation}
for a subset $\Theta_0\subset\Theta$.  Universal inference solves this problem by appealing to a likelihood ratio, however, one that is built using data splitting.

To split the data, one chooses a fraction $m_0 \in (0,1)$ and partitions the $n$ data points into two disjoint subsets $D_0 = \{ X_{1,0},\ldots,X_{\lfloor m_0 n \rfloor,0} \}$ and $D_1 = \{ X_{1,1},\ldots,X_{\lceil m_1 n \rceil,1}\}$, where  $m_1\equiv 1-m_0$.  In order to lighten notation in subsequent derivations, we simply write $m_0 n $ for $\lfloor m_0 n \rfloor$ and $m_1 n $ for $\lceil m_1 n \rceil$.   Let 
\[
\ell_k(\theta) = \sum_{i =1}^{m_k n} \log p_{\theta}(X_{i,k}), \quad k=0,1,
\]
be the log-likelihood functions based on $D_0$ and $D_1$, respectively.  Let $\widehat{\theta}_{n,0} := \text{argmax}_{\theta \in \Theta_0} \ell_{0}(\theta)$ be the maximum likelihood estimator (MLE) of $\theta$ under $H_0$ and based on $D_0$.  Furthermore, let $\widehat{\theta}_{n,1}:= \text{argmax}_{\theta \in \Theta} \ell_{1}(\theta)$ be the MLE of $\theta$ under the full model and based on $D_1$.  Now the split likelihood ratio statistic is defined as
\begin{equation}\label{split likelihood ratio stat}
	\Lambda_n:=2\left(\ell_{0}(\widehat{\theta}_{n,1})-\ell_{0}(\widehat{\theta}_{n,0})\right).
\end{equation}
As shown in \cite{universal}, an application of Markov's inequality yields for any $\alpha\in(0,1)$ \change{that under $H_0: \theta \in \Theta_0$ we have
\begin{align*}
   P_{\theta}\big(\Lambda_n > -2 \log (\alpha)\big) &\leq \alpha \E_{\theta}\Big[\tfrac{\prod_{i=1}^{m_0n} p_{\widehat{\theta}_{n,1}}(X_{i,0})}{ \prod_{i=1}^{m_0n} p_{\widehat{\theta}_{n,0}}(X_{i,0})}\Big] \\
   &\leq \alpha \E_{\theta}\Big[\E_{\theta}\Big[\tfrac{\prod_{i=1}^{m_0n} p_{\widehat{\theta}_{n,1}}(X_{i,0})}{ \prod_{i=1}^{m_0n} p_{\theta}(X_{i,0})}\Big|D_1\Big]\Big] \leq \alpha.
\end{align*}
Here, we used the fact that $\widehat{\theta}_{n,1}$ is fixed when we condition on $D_1$, and for any fixed $\theta^* \in \Theta$ it holds that 
\begin{equation*}
   \E_{\theta}\Big[\tfrac{\prod_{i=1}^{m_0n} p_{\theta^*}(X_{i,0})}{\prod_{i=1}^{m_0n}p_{\theta}(X_{i,0})}\Big] \leq \int \prod_{i=1}^{m_0n}p_{\theta^*}(X_{i,0}) =1.
\end{equation*}
Therefore,} the decision rule 
\begin{equation}\label{test decision}
	\text{reject } H_0 \ \text{ if } \ \Lambda_n>-2\log\left(\alpha\right)
\end{equation}
constitutes a valid level $\alpha$ test.  Notably, this \emph{split likelihood ratio test} (SLRT) holds level $\alpha$ in finite samples and without any regularity conditions.  

\begin{remark}
The MLE $\widehat{\theta}_{n,1}$ could be replaced by any other estimator and the test would continue to be valid.  While this may be of interest for computational reasons, we \change{focus in the following on the asymptotically efficient maximum likelihood estimator. However, the analysis can be extended in a similar fashion for any asymptotic linear estimator, see Remark \ref{asymplinear}.} 
\end{remark}

In the following, we derive the asymptotic  distribution of the split likelihood ratio $\Lambda_n$ and use it to study the power of the SLRT and the impact of the splitting ratio $m_0$.  Our calculation of the limiting distribution of $\Lambda_n$ is couched in the classical framework of local alternatives in models that are differentiable in quadratic mean.


		
\section{Asymptotic theory for the SLRT}\label{limitsection}

Let $\theta_0$ be a point in the interior of $\Theta$.  Assume that  $\theta_0\in\Theta_0$ and  define the sequence of parameters $\theta_n=\theta_0 + h/\sqrt{n}$ for a choice of $h\in\mathbb{R}^d$.  Suppose then that for each (large)  $n$ we are given an i.i.d.~sample of size $n$ from the local alternative $P_{\theta_n}$.  Suppose further that the considered model possesses the usual smoothness properties that lead to chi-square limits for the ordinary likelihood ratio, see, e.g., \cite{vdv}.  Specifically, we assume that:
\begin{itemize}
\item [(A1)] The model $\{P_{\theta}: \theta \in \Theta\}$ is differentiable in quadratic mean at $\theta_0$, with derivative (i.e., score function) $\dot{\ell}_{\theta_0}$.  Its Fisher information $\E_{\theta_0}[\dot{\ell}_{\theta_0}\dot{\ell}_{\theta_0}^T]= I(\theta_0)$ is nonsingular, and there exists a measurable function $\dot{\ell}$ with \linebreak $\E_{\theta_0}[\dot{\ell}^2]<\infty$ such that \begin{equation*}
   |\log p_{\theta_1}(x)- \log p_{\theta_2}(x)|\leq \dot{\ell}(x) \Vert \theta_1 - \theta_2 \Vert
\end{equation*}  
for all $\theta_1,\theta_2$ in a neighborhood of $\theta_0$.
 \item [(A2)] The maximum likelihood estimators $\widehat{\theta}_{n,0}$ and $\widehat{\theta}_{n,1}$ are consistent estimators \change{for  $\theta_0$} under $P_{\theta_0}$.
 \item [(A3)] The local parameter spaces $H_{n}:=\sqrt{n}(\Theta_0-\theta_0)$ converge to a set $H_0$.
 \end{itemize}

\change{Assumption (A3) uses the notion of convergence of sets in the sense of \cite{vdv}, that is, the local parameter spaces $H_n$ converge to the limit hypothesis $H_0$, if the set $H_0$ 
\begin{enumerate}
    \item[a)] is the set of all limits $\lim h_n$ of converging sequences with $h_n \in H_n$ and
    \item[b)] contains all limits $\lim_{i \to \infty} h_{n_i}$ of converging sequences with $h_{n_i} \in H_{n_i}$.
\end{enumerate}
Further, we} note that the set convergence is guaranteed to hold when $\Theta_0$ is defined by polynomial equations and inequalities, in which case the limit $H_0$ is the tangent cone of $\Theta_0$ at $\theta_0$  \citep{drton:2009}. 



\subsection{Asymptotic distribution} 
 
\begin{theorem}{(Asymptotic distribution of the split likelihood ratio statistic)} \label{theorem1}\\ Suppose the considered statistical model $\{P_{\theta}: \theta \in \Theta\}$ satisfies  assumptions (A1)-(A3). Then under $P_{\theta_n}$ with  $\theta_n=\theta_0 + h/\sqrt{n}$, the split likelihood ratio statistic from \eqref{split likelihood ratio stat} satisfies
\begin{equation*}
		\Lambda_n \;\overset{\mathcal{D}}{\longrightarrow}\; \Vert X + \sqrt{m_0}I(\theta_0)^{1/2}h-I(\theta_0)^{1/2} H_0\Vert^2 - \Vert X- \sqrt{\tfrac{m_0}{m_1}} Y \Vert^2,
\end{equation*}
    where $X,Y \sim \change{\mathcal{N}_d(0,\mathrm{Id})}$ independent and $\Vert x - H_0 \Vert=\inf_{h \in H_0} \Vert x - h \Vert$.
	\end{theorem}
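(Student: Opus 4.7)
The plan is to linearize $\Lambda_n$ around $\theta_0$ using the LAN expansion on $D_0$ and then transfer to the local alternative via Le Cam's third lemma. Introduce the score averages
\begin{equation*}
Z_{n,k} \;:=\; \tfrac{1}{\sqrt{m_k n}} \sum_{i=1}^{m_k n} \dot{\ell}_{\theta_0}(X_{i,k}), \qquad k=0,1,
\end{equation*}
write $\Lambda_n = 2(\ell_0(\widehat\theta_{n,1})-\ell_0(\theta_0)) - 2(\ell_0(\widehat\theta_{n,0})-\ell_0(\theta_0))$, and observe that under (A1) the LAN expansion
\begin{equation*}
2\bigl(\ell_0(\theta_0 + h/\sqrt{n}) - \ell_0(\theta_0)\bigr) \;=\; 2\sqrt{m_0}\,h^{T} Z_{n,0} \;-\; m_0\,h^{T} I(\theta_0) h \;+\; o_P(1)
\end{equation*}
holds uniformly on $\{\|h\|\le M\}$. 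Together, (A2) and contiguity of $P_{\theta_n}^n$ to $P_{\theta_0}^n$ give $\sqrt{n}$-boundedness of $h_{n,0} := \sqrt{n}(\widehat\theta_{n,0}-\theta_0)$ and $h_{n,1} := \sqrt{n}(\widehat\theta_{n,1}-\theta_0)$, so the expansion may be evaluated at these random arguments; critically, the independence of $D_0$ and $D_1$ legitimizes plugging in the data-dependent $h_{n,1}$ (a function only of $D_1$) into a LAN expansion built from $D_0$.

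For the constrained term, completing the square in $h$ and maximizing over $H_n$ yields
\begin{equation*}
2\bigl(\ell_0(\widehat\theta_{n,0}) - \ell_0(\theta_0)\bigr) \;=\; \|\tilde Z_{n,0}\|^{2} \;-\; \inf_{h \in H_n}\bigl\|\sqrt{m_0}\,I(\theta_0)^{1/2} h \,-\, \tilde Z_{n,0}\bigr\|^{2} \;+\; o_P(1),
\end{equation*}
with $\tilde Z_{n,k} := I(\theta_0)^{-1/2} Z_{n,k}$. Because $H_0$ is a cone (the tangent cone at $\theta_0$, as is standard under (A3); see \cite{drton:2009}) one has $\sqrt{m_0}\,H_0 = H_0$, so set convergence (A3) and an argmin-continuity argument send the infimum to $\|\tilde Z_{n,0} - I(\theta_0)^{1/2} H_0\|^{2}$. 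For the unconstrained term, efficient maximum-likelihood theory applied on $D_1$ gives $h_{n,1} = m_1^{-1/2}\,I(\theta_0)^{-1} Z_{n,1} + o_P(1)$; substituting into the LAN expansion delivers
\begin{equation*}
2\bigl(\ell_0(\widehat\theta_{n,1}) - \ell_0(\theta_0)\bigr) \;=\; 2\sqrt{m_0/m_1}\,\tilde Z_{n,1}^{T} \tilde Z_{n,0} \;-\; (m_0/m_1)\|\tilde Z_{n,1}\|^{2} \;+\; o_P(1).
\end{equation*}

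Taking the difference and completing the square a second time collapses three of the terms into a single norm, giving
\begin{equation*}
\Lambda_n \;=\; \bigl\|\tilde Z_{n,0} - I(\theta_0)^{1/2} H_0\bigr\|^{2} \;-\; \bigl\|\tilde Z_{n,0} - \sqrt{m_0/m_1}\,\tilde Z_{n,1}\bigr\|^{2} \;+\; o_P(1).
\end{equation*}
Under $P_{\theta_0}^n$, the independence of $D_0, D_1$ makes $(\tilde Z_{n,0}, \tilde Z_{n,1})$ jointly convergent to independent $\mathcal{N}_d(0,\mathrm{Id})$ vectors $(X,Y)$. To transfer to $P_{\theta_n}^n$, I would apply Le Cam's third lemma with the full-sample LAN log-likelihood ratio $L_n = \sqrt{m_0}\,h^{T} Z_{n,0} + \sqrt{m_1}\,h^{T} Z_{n,1} - \tfrac{1}{2} h^{T} I(\theta_0) h + o_P(1)$: the limiting means shift by the asymptotic covariances $\mathrm{Cov}(\tilde Z_{n,k}, L_n) \to \sqrt{m_k}\,I(\theta_0)^{1/2} h$, so $\tilde Z_{n,0} \overset{\mathcal{D}}{\to} X + \sqrt{m_0}\,I(\theta_0)^{1/2} h$ and $\tilde Z_{n,1} \overset{\mathcal{D}}{\to} Y + \sqrt{m_1}\,I(\theta_0)^{1/2} h$ jointly. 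Substituting, the $I(\theta_0)^{1/2} h$ drift cancels exactly in the second norm (since $\sqrt{m_0} - \sqrt{m_0/m_1}\,\sqrt{m_1} = 0$), reproducing the claimed limit.

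The principal technical obstacle is the passage from the $H_n$-supremum to the $H_0$-supremum in the analysis of $\widehat\theta_{n,0}$, which requires combining the uniform-in-$h$ quadratic expansion (via the Lipschitz envelope in (A1)), tightness of $h_{n,0}$, and an argmin-continuity argument for the set convergence $H_n \to H_0$, as in \cite{drton:2009} and \cite{vdv}. All remaining ingredients---LAN, the efficient MLE expansion on $D_1$, and Le Cam's third lemma---are classical once the cross-sample independence of $D_0$ and $D_1$ is exploited to decouple the plug-in $\widehat\theta_{n,1}$ from the likelihood on $D_0$.
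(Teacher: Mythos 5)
Your proposal is correct and follows essentially the same route as the paper's proof: the same decomposition of $\Lambda_n$ into two log-likelihood ratios anchored at $\theta_0$, the LAN expansion on $D_0$ evaluated at the $\sqrt{n}$-consistent split-sample MLEs (using independence of $D_0$ and $D_1$ to plug in $\widehat\theta_{n,1}$), the asymptotically linear expansion of the efficient MLE on $D_1$, completion of squares to the two-norm representation, Le Cam's first and third lemmas for the transfer to $P_{\theta_n}$ with the drift cancelling in the second norm, and continuity of distances under set convergence for the constrained term. The only (immaterial) difference is that you absorb the $\sqrt{m_0}$ rescaling of the local parameter space via the cone property of $H_0$, whereas the paper works directly with the subsequence $H_{m_0 n}\to H_0$.
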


	\begin{proof}
	The proof is based on classical local asymptotic normality results. As shown in Theorem 7.12 of \cite{vdv}, our assumption (A1) implies the existence of the Fisher information and the uniform approximation
	\begin{equation} \label{step1}
		\sup_{\|h\|\le M_n} \left| \log \prod_{i=1}^n    \frac{p_{\theta_0+h/\sqrt{n}}}{p_{\theta_0}}(X_i)- \frac{1}{\sqrt{n}}\sum_{i=1}^nh^T\dot{\ell}_{\theta_0}(X_i)+\frac{1}{2}h^T I(\theta_0)h\right| = o_{P_{\theta_0}}(1)
	\end{equation}
	for $M_n$ a slowly diverging sequence in $\mathbb{R}$.  Via the results collected in \cite{vdv}, assumption (A2) implies that consistent MLEs are $\sqrt{n}$-consistent, which entails that both our split sample MLEs $\hat\theta_{n,0}$ and $\hat\theta_{n,1}$ are $\sqrt{n}$-consistent under $P_{\theta_0}$.
	
	Define $\widehat{\psi}_{n,1}:=\sqrt{m_1 n}(\widehat{\theta}_{n,1}-\theta_0)$, $G_{n,0}:=\frac{1}{\sqrt{m_0 n}}\sum_{i=1}^{m_0 n} \dot{\ell}_{\theta_0}(X_{i,0})$ and $ G_{n,1}:=\frac{1}{\sqrt{m_1 n}}\sum_{i=1}^{m_1 n} \dot{\ell}_{\theta_0}(X_{i,1})$.  Let $B(M_n)=\{h\in\mathbb{R}^d:\|h\|\le M_n\}$ be the ball of radius $M_n$.
	Similarly to the proof of Theorem 16.7 in \cite{vdv} but accounting for the split sample, we obtain from \eqref{step1} and the $\sqrt{n}$-consistency of $\hat\theta_{n,0}$ and $\hat\theta_{n,1}$ that
	\begin{align*}
		\Lambda_n&=2\left(\ell_{0}(\widehat{\theta}_{n,1})-\ell_{0}(\widehat{\theta}_{n,0})\right) \\
		&=2\left(\log \prod_{i=1}^{m_0 n}\frac{p_{\theta_0+ \widehat{\psi}_{n,1}/\sqrt{m_1n}}}{p_{\theta_0}}(X_{i,0})-\sup_{h \in H_{m_0n}}\log \prod_{i=1}^{m_0 n}\frac{p_{\theta_0 + h/\sqrt{m_0n}}}{p_{\theta_0}}(X_{i,0}) \right) \\
		&=2\Bigg(\sqrt{\frac{m_0}{m_1}} \widehat{\psi}_{n,1}^TG_{n,0} - \frac{m_0}{2m_1} \widehat{\psi}_{n,1}^T I(\theta_0)  \widehat{\psi}_{n,1}\\
		&\qquad \quad - \sup_{h \in H_{m_0n}\cap B(M_n)}\left(h^T G_{n,0} - \frac{1}{2} h^T I(\theta_0) h\right)\Bigg) + o_{P_{\theta_0}}(1) \\
		&=\Vert I(\theta_0)^{-1/2}G_{n,0}-I(\theta_0)^{1/2}[H_{m_0n}\cap B(M_n)]\Vert^2 \\
		& \qquad \quad -\Vert I(\theta_0)^{-1/2}G_{n,0} - \sqrt{\frac{m_0}{m_1}}I(\theta_0)^{1/2} \widehat{\psi}_{n,1}\Vert^2 + o_{P_{\theta_0}}(1).
	\end{align*}
	By Theorem 5.39 in \cite{vdv}, the MLE $\hat\theta_{n,1}$ is asymptotically linear with $\widehat{\psi}_{n,1}=I(\theta_0)^{-1}G_{n,1} + o_{P_{\theta_0}}(1)$.  Hence,
	\begin{align}
	\label{eq:step2}
		\Lambda_n&=\Vert I(\theta_0)^{-1/2}G_{n,0}-I(\theta_0)^{1/2} [H_{m_0n}\cap B(M_n)]\Vert^2 \\
		&\qquad  -\Vert I(\theta_0)^{-1/2}G_{n,0} - \sqrt{\frac{m_0}{m_1}}I(\theta_0)^{-1/2}G_{n,1} \Vert^2 + o_{P_{\theta_0}}(1). \nonumber
	\end{align}
	Now we use Le Cam's Lemmas to show contiguity of $P_{\theta_0}$ and $P_{\theta_n}$. Applying \eqref{step1},
		\begin{align*}
			\log \frac{\text{d}P^{\otimes n}_{\theta_n}}{\text{d}P^{\otimes n}_{\theta_0}}&=\log \left(  \prod_{i=1}^{m_0 n} \frac{p_{\theta_0+h/\sqrt{n}}}{p_{\theta_0}}(X_{i,0})  \prod_{i=1}^{m_1 n} \frac{p_{\theta_0+h/\sqrt{n}}}{p_{\theta_0}}(X_{i,1}) \right) \\&= \frac{1}{\sqrt{n}}\left(\sum_{i=1}^{m_0n} h^T\dot{\ell}_{\theta_0}(X_{i,0})+ \sum_{i=1}^{m_1n} h^T\dot{\ell}_{\theta_0}(X_{i,1})\right)-\frac{1}{2}h^T I(\theta_0)h + o_{P_{\theta_0}}(1).
		\end{align*}
    The central limit theorem yields that under $P_{\theta_0}$,
		\begin{equation*}
			\begin{pmatrix}
				G_{n,0} \\
				G_{n,1} \\
				\log \frac{\text{d}P^{\otimes n}_{\theta_n}}{\text{d}P^{\otimes n}_{\theta_0}}
			\end{pmatrix} \;\overset{\mathcal{D}}{\longrightarrow} \; \change{\mathcal{N}_{2d+1}} \left(\mu,\Sigma\right),
		\end{equation*}
		where
		\begin{equation*}
		    \mu:=\begin{pmatrix}
					0 \\ 0 \\ -\frac{1}{2}h^T I(\theta_0)h
		\end{pmatrix}, \qquad \Sigma:= \begin{bmatrix}
		I(\theta_0) & 0 & \sqrt{m_0} I(\theta_0)h \\
		0 & I(\theta_0) & \sqrt{m_1}I(\theta_0)h \\
		\sqrt{m_0}I(\theta_0)h & \sqrt{m_1}I(\theta_0)h & h^TI(\theta_0)h 
		\end{bmatrix}.
		\end{equation*}
		By Le Cam's first lemma, the probability measures $P_{\theta_0}$ and $P_{\theta_n}$ are thus mutually contiguous and therefore $o_{P_{\theta_0}}(1)$ and $o_{P_{\theta_n}}(1)$ interchangeable.  By Le Cam's third lemma, it follows that under $P_{\theta_n}$ we have
		\begin{equation}\label{step2}
			\begin{pmatrix}
					G_{n,0} \\
				G_{n,1}
			\end{pmatrix} \; \overset{\mathcal{D}}{\rightarrow} \; \change{\mathcal{N}_{2d}} \left(\begin{pmatrix} 
			\sqrt{m_0} I(\theta_0)h \\
			\sqrt{m_1} I(\theta_0)h 
			\end{pmatrix}, \begin{bmatrix}
				I(\theta_0) & 0 \\
				0 &	I(\theta_0) 
		\end{bmatrix}
			\right).
		\end{equation}
       We may now use this joint convergence in \eqref{eq:step2} and arrive at our claim by observing that for any converging sequence of random vectors $X_n \overset{\mathcal{D}}{\rightarrow} X$ and any sequence of converging sets $H_n \rightarrow H$ it holds that
		\begin{equation*}
		    \Vert X_n - H_n \Vert \; \overset{\mathcal{D}}{\rightarrow}\; \Vert X - H \Vert;
		\end{equation*}
		see Lemma 7.13 in \cite{vdv}. Indeed, our assumption (A3) implies the convergence \linebreak $H_{m_0n}\cap B(M_n)\rightarrow H_0$ and our claim follows.
	\end{proof}
 \change{\begin{remark} \label{asymplinear}
     Throughout this work we use the MLE $\widehat{\theta}_{n,1}$ to solve the estimation task on data set $D_1$. Suppose we employ instead a suboptimal, asymptotically linear estimator $\tilde{\theta}_{n,1}$, that is,
    \[ \sqrt{m_1n}(\tilde{\theta}_{n,1}-\theta_0)=\tfrac{1}{\sqrt{m_1n}}I(\theta_0)^{-1}\sum_{i=1}^{m_1n}\tilde{g}(X_{i,1}) + o_{P_{\theta_0}}(1), 
    \]
    with $\mathrm{Var}[\tilde{g}(X_{i,1})]\succeq I(\theta_0)$, where $\succeq$ denotes the Loewner order. Tracing the proof of Theorem \ref{theorem1}, one obtains that
    \begin{equation}\label{eq:limitsubopt}
		\tilde{\Lambda}_n \;\overset{\mathcal{D}}{\longrightarrow}\; \Vert X + \sqrt{m_0}I(\theta_0)^{1/2}h-I(\theta_0)^{1/2} H_0\Vert^2 - \Vert X- \sqrt{\tfrac{m_0}{m_1}} \tilde{Y} \Vert^2,
    \end{equation}
    where $X,\tilde{Y}$ are independent, $X \sim \mathcal{N}_d(0,\mathrm{Id})$ and $\tilde{Y} \sim \mathcal{N}_d(\mu_{\tilde{Y}},V_{\tilde{Y}})$ with $\mu_{\tilde{Y}}\neq 0$ and $V_{\tilde{Y}} \succeq \mathrm{Id}$. In fact, we may assume that $V_{\tilde{Y}}$ is diagonal with diagonal entries $(V_{\tilde{Y}})_{jj}\geq1.$  (Otherwise, apply an orthogonal transformation to $X- \sqrt{\tfrac{m_0}{m_1}} \tilde{Y}$ to diagonalize $V_{\tilde{Y}}$.) The second part of the representation of the asymptotic distribution is thus 
    \begin{equation*}
    \Vert X- \sqrt{\tfrac{m_0}{m_1}} \tilde{Y} \Vert^2 \overset{\mathcal{D}}{=}\sum_{i=1}^d\big(1+\tfrac{m_0}{m_1}(V_{\tilde{Y}})_{ii}\big) Z_i^2 \geq \big(1+\tfrac{m_0}{m_1}\big) \sum_{i=1}^d Z_i^2,   
    \end{equation*}
    where $Z_i \sim \mathcal{N}(-\sqrt{\tfrac{m_0}{m_1}}(\mu_{\tilde{Y}})_i,1)$ are independent.  Now, $\sum_{i=1}^d Z_i^2 \sim \chi^2_d(\lambda)$ with  noncentrality parameter $\lambda>0$.  Since $\chi^2_d(\lambda)$ is stochastically larger than a (central) $\chi^2_d$-distribution, we obtain from \eqref{eq:limitsubopt} that the limiting distribution when using a suboptimal estimator is stochastically smaller than the limit distribution of Theorem \ref{theorem1} with the MLE.  Using a suboptimal estimator on $D_1$ thus leads to a decrease in power for the SLRT.
    \end{remark}}
    

    In the sequel, we investigate properties of the limiting distribution of the split likelihood ratio statistic in the smooth case, where the original null hypothesis is a $k$-dimensional smooth manifold and the limiting set $H_0$ is thus a $k$-dimensional tangent space. We start by introducing the arising new class of distributions, \textit{noncentral split chi-square distributions}, that depends on four parameters, the dimension of the parameter space, the dimension of the null hypothesis, the splitting ratio, and a noncentrality parameter.  
    
    \begin{definition}{(Noncentral split chi-square distribution)} \\
     Let $d\in\mathbb{N}$, \change{$p\in\{0,\dots,d\}$}, $\delta\ge 0$, and $m_0\in(0,1)$.  
      The $d$-dimensional \textit{noncentral split chi-square distribution} with $p$ degrees of freedom, noncentrality parameter $\delta$ and splitting ratio $m_0$, denoted $\textnormal{split}_{m_0}\textnormal{-}\chi^{2}_{p,d}(\delta)$, is the distribution of 
      \[
      \Vert X_{[p]} +\sqrt{m_0}h \Vert^2 - \Vert X- \sqrt{\tfrac{m_0}{1-m_0}} Y \Vert^2 \;\sim\; \textnormal{split}_{m_0}\textnormal{-}\chi^{2}_{p,d}(\delta),
      \]
      where $X,Y\sim\mathcal{N}_d(0,\mathrm{Id})$ \change{independent} and $h \in \R^p$ such that $h^Th=\delta$.
    \end{definition}
        
    We emphasize that the noncentral split chi-square distribution is well-defined in that it depends on the vector $h$ only through its norm $\delta$.  This follows from the invariance of the standard normal distribution of $X$ and  $Y$ under orthogonal rotations analogously to the classical noncentral chi-square distribution.  
    
    While the classical noncentral chi-square distribution is the distribution of the squared distance from a standard normal vector to some fixed point in the space, the noncentral split chi-square distribution is the distribution of the difference of two squared distances. The first part is the squared distance from a standard normal vector to a fixed point in the space. However, a second part arises from splitting the data into two subsets, namely, the squared distance of two independent standard normal vectors scaled according to the splitting ratio. Notice that the two parts are not independent.
    
    In the following we calculate the first moments of this new class of distributions. In Section~\ref{subsec:optimal} 
    we use the calculated moments to approximate the noncentral split chi-square distribution and thus the asymptotic behavior of the SLRT.
	\begin{corollary}{(Moments of the noncentral split chi-square distribution)} \label{cor2} \\
		Let $Z\sim\textnormal{split}_{m_0}\textnormal{-}\chi^{2}_{p,d}(\delta)$. Then
		\begin{enumerate}
			\item 
			$
			\E[Z]=	p-d-d\frac{m_0}{1-m_0}+m_0 \delta,
			$
			\item
			$
			\mathrm{Var}[Z]=	2(d-p) + 4d \frac{m_0}{1-m_0} + 2d \frac{m_0^2}{(1-m_0)^2} + 4m_0 \delta. 
			$
		\end{enumerate}
	\end{corollary}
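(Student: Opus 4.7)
The natural approach is to split $Z = A - B$ with
\[
A := \|X_{[p]} + \sqrt{m_0}\,h\|^2, \qquad B := \|X - \sqrt{m_0/(1-m_0)}\,Y\|^2,
\]
so that $A$ is a classical noncentral $\chi^2_p$ with noncentrality $\|\sqrt{m_0}\,h\|^2 = m_0\delta$, while $X - \sqrt{m_0/(1-m_0)}\,Y \sim \mathcal N_d(0,(1+m_0/(1-m_0))\,\mathrm{Id})$ gives $B/(1+m_0/(1-m_0)) \sim \chi^2_d$. Standard chi-square moment formulas then yield $\mathbb E[A] = p + m_0\delta$, $\mathrm{Var}[A] = 2p + 4m_0\delta$, $\mathbb E[B] = d(1+\tfrac{m_0}{1-m_0})$, and $\mathrm{Var}[B] = 2d(1+\tfrac{m_0}{1-m_0})^2$. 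The expression for $\mathbb E[Z]$ then follows at once by linearity.

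For the variance I would use $\mathrm{Var}[Z] = \mathrm{Var}[A] + \mathrm{Var}[B] - 2\,\mathrm{Cov}[A,B]$. The key structural observation is that $A$ depends on $X$ only through its first $p$ coordinates $X_{[p]}$, whereas $B$ splits cleanly as
\[
B = \sum_{i=1}^{p}\bigl(X_i - \sqrt{m_0/(1-m_0)}\,Y_i\bigr)^2 \;+\; \sum_{i=p+1}^{d}\bigl(X_i - \sqrt{m_0/(1-m_0)}\,Y_i\bigr)^2,
\]
where the second sum involves none of the variables appearing in $A$ and hence is independent of $A$. By independence across coordinates, $\mathrm{Cov}[A,B]$ therefore reduces to a sum of per-coordinate covariances $c_i := \mathrm{Cov}\bigl[(X_i+\sqrt{m_0}\,h_i)^2,\,(X_i - \sqrt{m_0/(1-m_0)}\,Y_i)^2\bigr]$ for $i = 1,\dots,p$.

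The heart of the proof is the evaluation of each $c_i$. Expanding both squares and using bilinearity of covariance, the cross-covariances $\mathrm{Cov}[X_i^2,X_iY_i]$, $\mathrm{Cov}[X_i^2,Y_i^2]$, $\mathrm{Cov}[X_i,X_i^2]$, $\mathrm{Cov}[X_i,X_iY_i]$, and $\mathrm{Cov}[X_i,Y_i^2]$ all vanish by the Gaussian moment identities $\mathbb E[X_i] = \mathbb E[X_i^3] = 0$, $\mathbb E[X_i^2] = 1$ and by the independence of $X_i$ and $Y_i$. The only surviving contribution is $\mathrm{Cov}[X_i^2,X_i^2] = \mathrm{Var}[X_i^2] = 2$, so that $c_i = 2$ irrespective of $h_i$ and $m_0$. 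Consequently $\mathrm{Cov}[A,B] = 2p$, and substituting while expanding $(1+m_0/(1-m_0))^2 = 1 + 2\tfrac{m_0}{1-m_0} + \tfrac{m_0^2}{(1-m_0)^2}$ reproduces the claimed formula for $\mathrm{Var}[Z]$.

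The one non-routine point is the cancellation that makes each per-coordinate covariance equal to the constant $2$, independent of the noncentrality and the splitting ratio; the rest is bookkeeping. A cleaner alternative would be to condition on $X_{[p]}$ and invoke the law of total variance, but the direct covariance calculation above is the most transparent.
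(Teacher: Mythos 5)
Your proof is correct: the decomposition $Z=A-B$, the identification of $A$ as a noncentral $\chi^2_p(m_0\delta)$ and of $B$ as $(1+\tfrac{m_0}{1-m_0})$ times a central $\chi^2_d$, and the covariance computation $\mathrm{Cov}[A,B]=2p$ all check out; I verified that the per-coordinate covariance indeed collapses to $\mathrm{Var}[X_i^2]=2$ because every cross term involves either an odd moment of $X_i$ or independence from $Y_i$, and substituting into $\mathrm{Var}[A]+\mathrm{Var}[B]-2\,\mathrm{Cov}[A,B]$ reproduces the stated formula. Your route differs from the paper's: rather than splitting $Z$ into two dependent pieces and handling their covariance by hand, the paper writes $Z$ as a single Gaussian quadratic form $m_0(\epsilon+\mu)^T A(\epsilon+\mu)$ in a $2d$-dimensional vector $\epsilon$ with block-diagonal covariance, and then reads off mean and variance from the standard identities $\E[(\epsilon+\mu)^TA(\epsilon+\mu)]=\mathrm{tr}[A\Sigma]+\mu^TA\mu$ and $\mathrm{Var}[(\epsilon+\mu)^TA(\epsilon+\mu)]=2\,\mathrm{tr}[A\Sigma A\Sigma]+4\mu^TA\Sigma A\mu$. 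Your argument is more elementary and makes the source of each term (and the $-2\,\mathrm{Cov}[A,B]=-4p$ correction) transparent, whereas the paper's quadratic-form representation buys a uniform treatment of all higher moments via the cumulant formula $\kappa_n=2^{n-1}(n-1)!\,\mathrm{tr}[A^n]$, which is exactly what the remark following the corollary exploits; a coordinatewise covariance expansion would become increasingly unwieldy for third and fourth moments.
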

	
	\begin{proof}
	Let  
	\[
	\epsilon \sim \mathcal{N}_{2d}\left(0,\begin{bmatrix} m_0^{-1}\mathrm{Id} & 0 \\ 0 & m_1^{-1} \mathrm{Id} \end{bmatrix}\right),
	\]
	and define \[
	\mu  :=  \begin{pmatrix}
	h \\ 0 \\ h \\ 0
		\end{pmatrix}, \quad
		A := \begin{bmatrix}
		0 & 0 & \mathrm{Id_p} & 0  \\
		0 & -\mathrm{Id_k} & 0 & \mathrm{Id_k} \\
		\mathrm{Id_p} & 0 & -\mathrm{Id_p} & 0 \\
		0 & \mathrm{Id_k} & 0 & -\mathrm{Id_k}
		\end{bmatrix},
	\]
	with $h \in \R^p$ such that $h^Th=\delta$. Then the quadratic form $m_0(\epsilon+\mu)^T A (\epsilon+\mu)$ follows a
	$\textnormal{split}_{m_0}\textnormal{-}\chi^{2}_{p,d}(\delta)$ distribution and we can use properties of quadratic forms to calculate moments of the noncentral split chi-square distribution.

	Using $\E [(\epsilon+\mu)^T A (\epsilon+\mu)]= \mathrm{tr}[A \Sigma] + \mu^T A \mu $, a short calculation yields the claim for the expectation and the claimed variance follows via Var$[(\epsilon+\mu)^T A (\epsilon+\mu)]=2\, \mathrm{tr}[A \Sigma A \Sigma] + 4 \mu^T A \Sigma A \mu $.
	\end{proof}
	\begin{remark}
    Higher moments can be calculated via the cumulants $\kappa_n(\epsilon^T A \epsilon)=2^{n-1}(n-1)!\,\mathrm{tr}[A^n]$ with the following formulas for moments of quadratic forms:
    \begin{enumerate}
        \item[1.] $\E[(\epsilon^T A \epsilon)^1]=\kappa_1$.
        \item[2.] $\E[(\epsilon^T A \epsilon)^2]=\kappa_1^2+\kappa_2$.
        \item[3.] $\E[(\epsilon^T A \epsilon)^3]=\kappa_1^3+3\kappa_1\kappa_2+\kappa_3$.
        \item[4.] $\E[(\epsilon^T A \epsilon)^4]=\kappa_1^4+6\kappa_1^2\kappa_2+3\kappa_2^2+4\kappa_1\kappa_3+\kappa_4$.
    \end{enumerate}
    Formulas for moments up to order ten can be found in \cite{kendall}.
    \end{remark}

    Due to the rotational invariance of the standard normal distribution, we may study the limit of the SLRT in the smooth case, where the limiting hypothesis is a $k$-dimensional tangent space, by simply assuming that $I(\theta_0)^{1/2}H_0$ is a coordinate subspace, i.e., $I(\theta_0)^{1/2}H_0 =  \{0\}^p \times \R^k$  with $d=p+k$. 
	
	\begin{corollary} \label{cor1}
		If the rotated limiting hypothesis $I(\theta_0)^{1/2}H_0=\{0\}^p \times \R^k$ is a coordinate subspace, then the asymptotic distribution from Theorem \ref{theorem1} follows a d-dimensional \textit{noncentral split chi-square distribution} with $p$ degrees of freedom, noncentrality parameter $\tilde h^T \tilde h$ and splitting ratio $m_0$. That is
	\begin{equation*}
		\Lambda_{\infty} \overset{\mathcal{D}}{=}	\Vert X_{[p]} +\sqrt{m_0}\tilde h_{[p]} \Vert^2 - \Vert X- \sqrt{\tfrac{m_0}{m_1}} Y \Vert^2 \;\sim\; \textnormal{split}_{m_0}\textnormal{-}\chi^{2}_{p,d}(\tilde h_{[p]}^T \tilde h_{[p]}),
	\end{equation*}
	with $X,Y \sim \mathcal{N}_d(0,\mathrm{Id})$ independent and $\tilde h=[I(\theta_0)^{1/2}h]_{[p]}$.	
	\begin{proof}
		We look at the first part of the limiting distribution from Theorem \ref{theorem1}. With $X \sim \mathcal{N}_d(\sqrt{m_0}I(\theta_0)^{1/2}h, \mathrm{Id})$  we have 
		\[
		\Vert X -I(\theta_0)^{1/2}H_0\Vert^2= \inf_{\theta \in \R^k} \left(X- \begin{pmatrix}
			0 \\ \theta
		\end{pmatrix}\right)^T  \left(X- \begin{pmatrix}
		0 \\ \theta
		\end{pmatrix}\right) = X_{[p]}^T  X_{[p]},
		\]
        and the claim follows immediately.
	\end{proof} 

	\end{corollary}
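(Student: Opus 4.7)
The plan is to recognize that when $I(\theta_0)^{1/2}H_0 = \{0\}^p\times\R^k$ is a coordinate subspace, the infimum defining the first term in the limit of Theorem \ref{theorem1} is realized by an elementary orthogonal projection that simply discards the last $k$ coordinates. After that, the two summands will match the two blocks of the noncentral split chi-square definition almost verbatim.

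First, I would compute the projection. For any vector $v \in \R^d$, the squared distance $\inf_{\theta\in\R^k}\|v - (0,\theta)^T\|^2$ equals $\|v_{[p]}\|^2$, because the minimizer is $\theta = (v_{p+1}, \dots, v_d)^T$ and only the first $p$ coordinates of $v$ survive in the residual. Applying this with $v = X + \sqrt{m_0}\,I(\theta_0)^{1/2} h$ for $X \sim \mathcal{N}_d(0,\mathrm{Id})$, the first summand from Theorem \ref{theorem1} becomes
\[
\|X_{[p]} + \sqrt{m_0}\,[I(\theta_0)^{1/2}h]_{[p]}\|^2 \;=\; \|X_{[p]} + \sqrt{m_0}\,\tilde h\|^2,
\]
where $X_{[p]} \sim \mathcal{N}_p(0,\mathrm{Id}_p)$ by marginalization of the standard normal. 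This matches the first block in the definition of $\textnormal{split}_{m_0}\textnormal{-}\chi^{2}_{p,d}$ with shift vector of squared norm $\tilde h^T\tilde h$.

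The second summand $\|X - \sqrt{m_0/m_1}\,Y\|^2$ requires no manipulation: it appears verbatim in the definition with the same independent standard normals. Assembling the two pieces yields the claimed law. I do not anticipate any genuine obstacle; the content of the corollary is just that a coordinate subspace hypothesis turns the first distance into a marginal squared norm, and the rotational invariance noted right after the definition makes the identification clean since only $\tilde h^T\tilde h$, not the direction of $\tilde h$, enters the distribution.
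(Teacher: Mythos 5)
Your proposal is correct and follows essentially the same route as the paper: compute the orthogonal projection onto the coordinate subspace $\{0\}^p\times\R^k$, observe that only the first $p$ coordinates survive so the first term becomes $\Vert X_{[p]}+\sqrt{m_0}\,\tilde h\Vert^2$, and note that the second term needs no manipulation. The paper's own proof is the same one-line projection computation, absorbing the mean shift into $X$ rather than writing it separately, so there is no substantive difference.
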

    
	\begin{remark}
	Under the null hypothesis, the limiting distribution of the split likelihood ratio test statistic reduces to the following difference of dependent (scaled) chi-square distributions 
	\begin{equation}\label{limhyp}
	\Lambda_n \;\overset{\mathcal{D}}{\rightarrow} \;\Vert X_{[p]} \Vert^2 - \Vert X- \sqrt{\tfrac{m_0}{m_1}} Y \Vert^2,
    \end{equation}
	with $X,Y \sim \mathcal{N}_d(0,\mathrm{Id})$ independent. 
	\end{remark}
	
	The limiting null distribution in \eqref{limhyp} clearly shows the asymptotic difference between the LRT and the SLRT. For the SLRT, a new second term arises in the limit that behaves like a scaled chi-square distributed random variable where the scaling factor depends only on  the chosen splitting ratio. Furthermore, looking at \change{Corollary \ref{cor2}}, the limiting distribution has a negative expectation under the null hypothesis.

    The SLRT uses the conservative  critical value $-2\log(\alpha)$ that is universally valid but whose adoption may come with a loss of power.  In Figures \ref{empiricalquant} and \ref{empiricalquant2} we illustrate the source of this loss of power in different settings by comparing the universal threshold (\texttt{SLRT}) of the SLRT with 
    (simulated) quantiles of a split chi-square distribution (\texttt{Asym}), the limiting distribution  under the null hypothesis \eqref{limhyp}. \change{The difference between the universal threshold and the quantile of the limiting distribution is smaller for lower significance level $\alpha$ and thus, the power loss, which stems from employing an universal threshold, is less noteworthy for low significance levels.}  Furthermore, the universal threshold is asymptotically more accurate for smaller splitting ratios. Moreover, we observe that \change{using} the universal threshold is asymptotically less precise for higher dimensions of the parameter space and for higher dimensional hypotheses. In Section \ref{simulations1} we further analyze the power loss from using the universal threshold in simulations.

    \begin{figure}[t]
		\centering
		\includegraphics[width=0.75\linewidth]{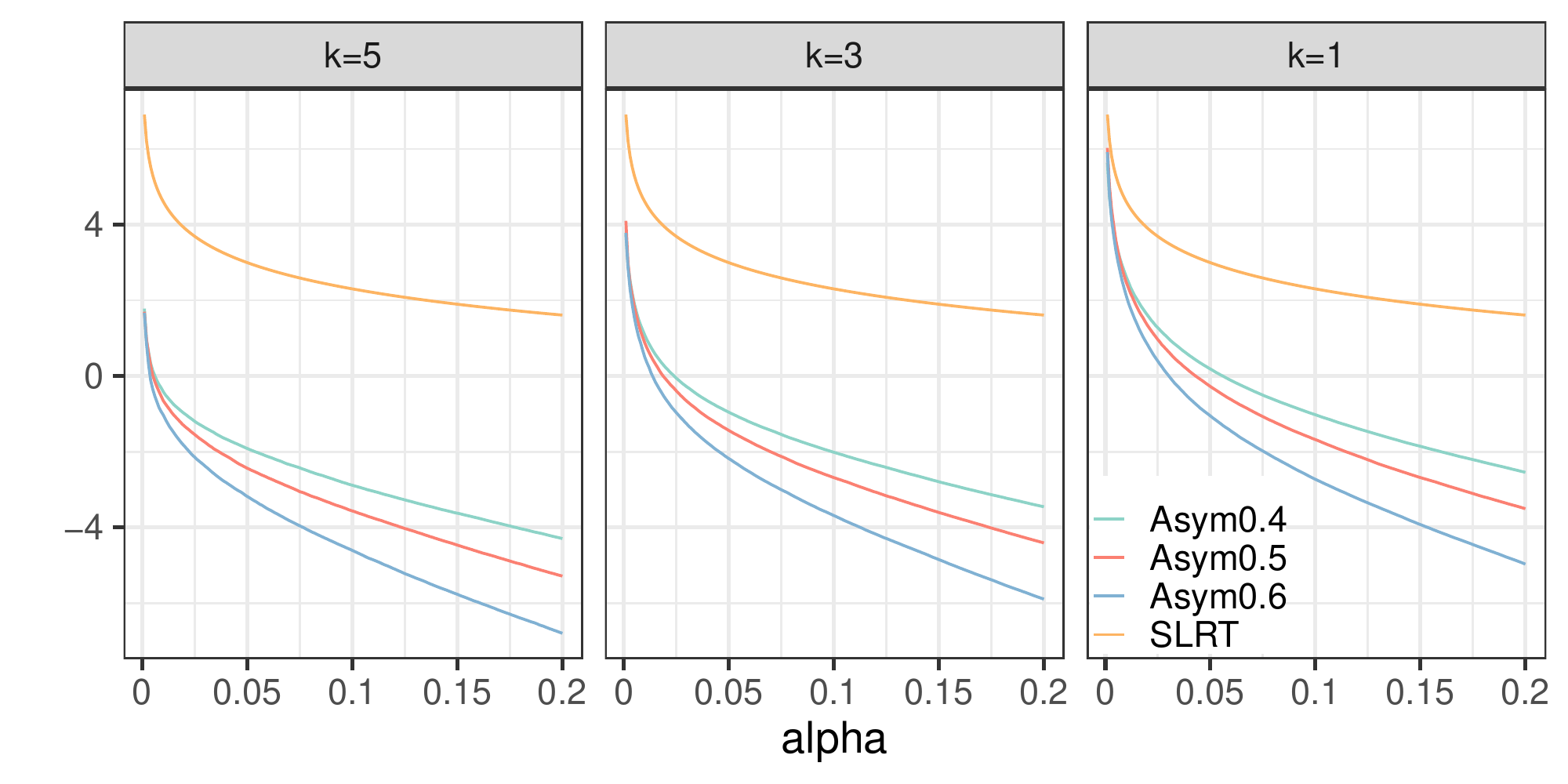}
		\caption{Quantile of $\textnormal{split}_{m_0}\textnormal{-}\chi^{2}_{p,6}$ compared to the universal threshold.}\label{empiricalquant}
		
		\centering
		\includegraphics[width=0.78\linewidth]{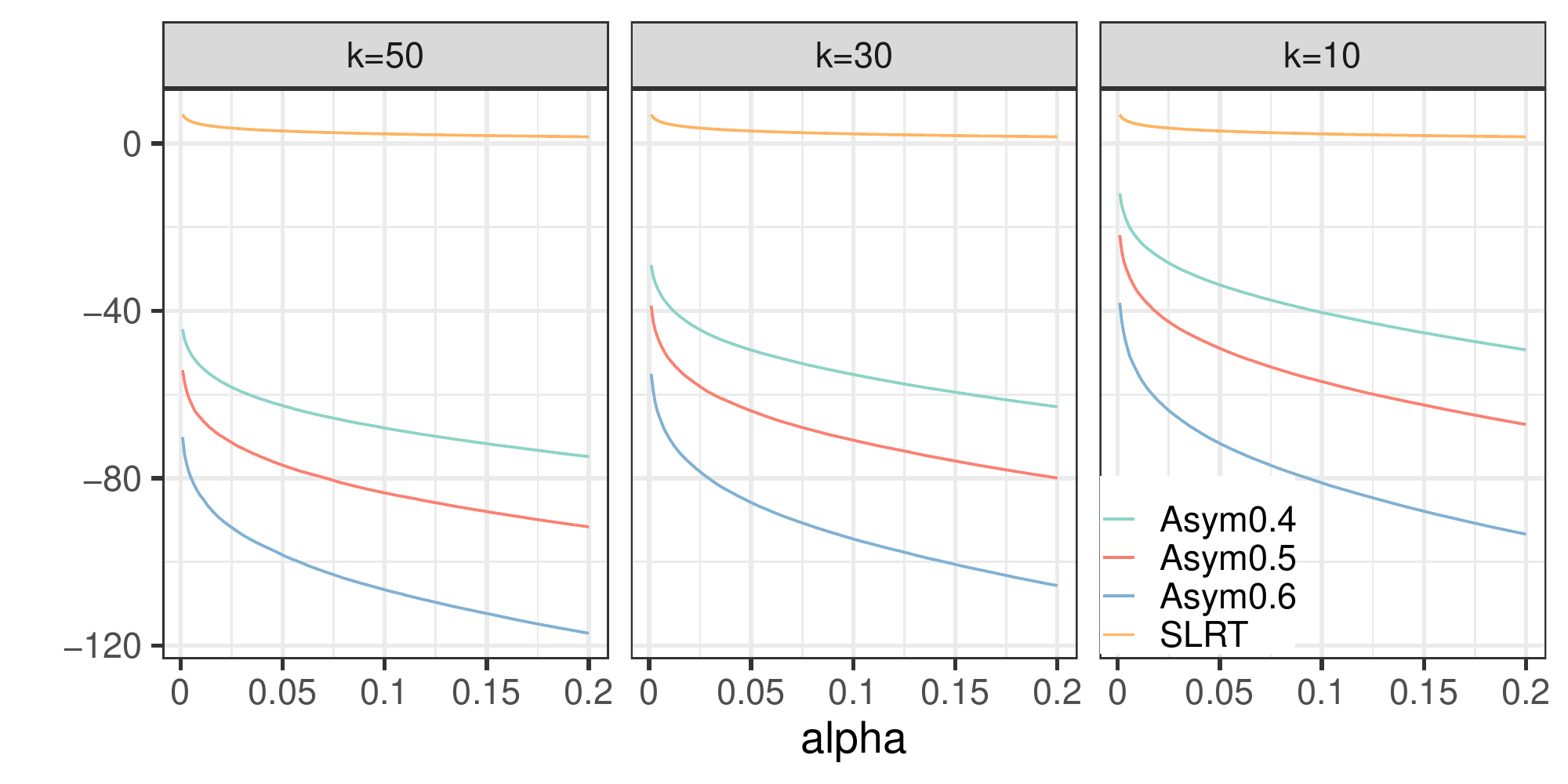}
		\caption{Quantile of $\textnormal{split}_{m_0}\textnormal{-}\chi^{2}_{p,60}$ compared to the universal threshold.}\label{empiricalquant2}
    \end{figure}
    
    \subsection{Optimal splitting ratio}
    \label{subsec:optimal}
    
	The main advantage of the SLRT over classical likelihood methods is its flexibility for settings where asymptotic distributions are difficult to obtain. This flexibility that stems from using only the general Markov inequality to control the type I error comes at the price of a potential loss of power.  In the smooth setting of Theorem \ref{theorem1}, we could improve the asymptotic power of the SLRT by using quantiles from the calculated asymptotic distribution, but such an asymptotic SLRT is not of practical relevance as the testing problem could then be better solved using the standard LRT, see Section \ref{simulations1}.  
	
	Instead, our focus will remain on the SLRT with its conservative critical value $-2\log(\alpha)$, and our goal is to provide a new method for choosing the splitting ratio $m_0$ that helps retain power. The idea behind our proposed method is simple.  Having access to the asymptotic distribution of the split likelihood ratio, the noncentral split chi-square distribution, we choose the splitting ratio that achieves the highest (asymptotic) power. Given both the dimensions of the null and alternative hypotheses and a significance level $\alpha$, we minimize the cumulative distribution function of the noncentral split chi-square distribution at \change{$-2\log(\alpha)$} with respect to the splitting ratio. To achieve a meaningful and comparable power, we propose to scale the unknown noncentrality parameter such that the best-performing method achieves a power of $0.8$. \change{We note that this tuning parameter of our method can easily be adapted if practitioners prefer to choose the best-performing method in different power levels. However, in our experience the effect of this tuning parameter on the performance is negligible across reasonable power levels.}
	
	As we are still lacking dedicated numerical routines to evaluate the
	cumulative distribution function of the noncentral split chi-square distribution, we use Monte Carlo approximations via repeated sampling from a noncentral split chi-square distribution.  Using these approximations, we then choose the best-performing splitting ratio over a fine grid. 
	
	\begin{figure}[t]
	\centering
    \includegraphics[width=0.49\linewidth]{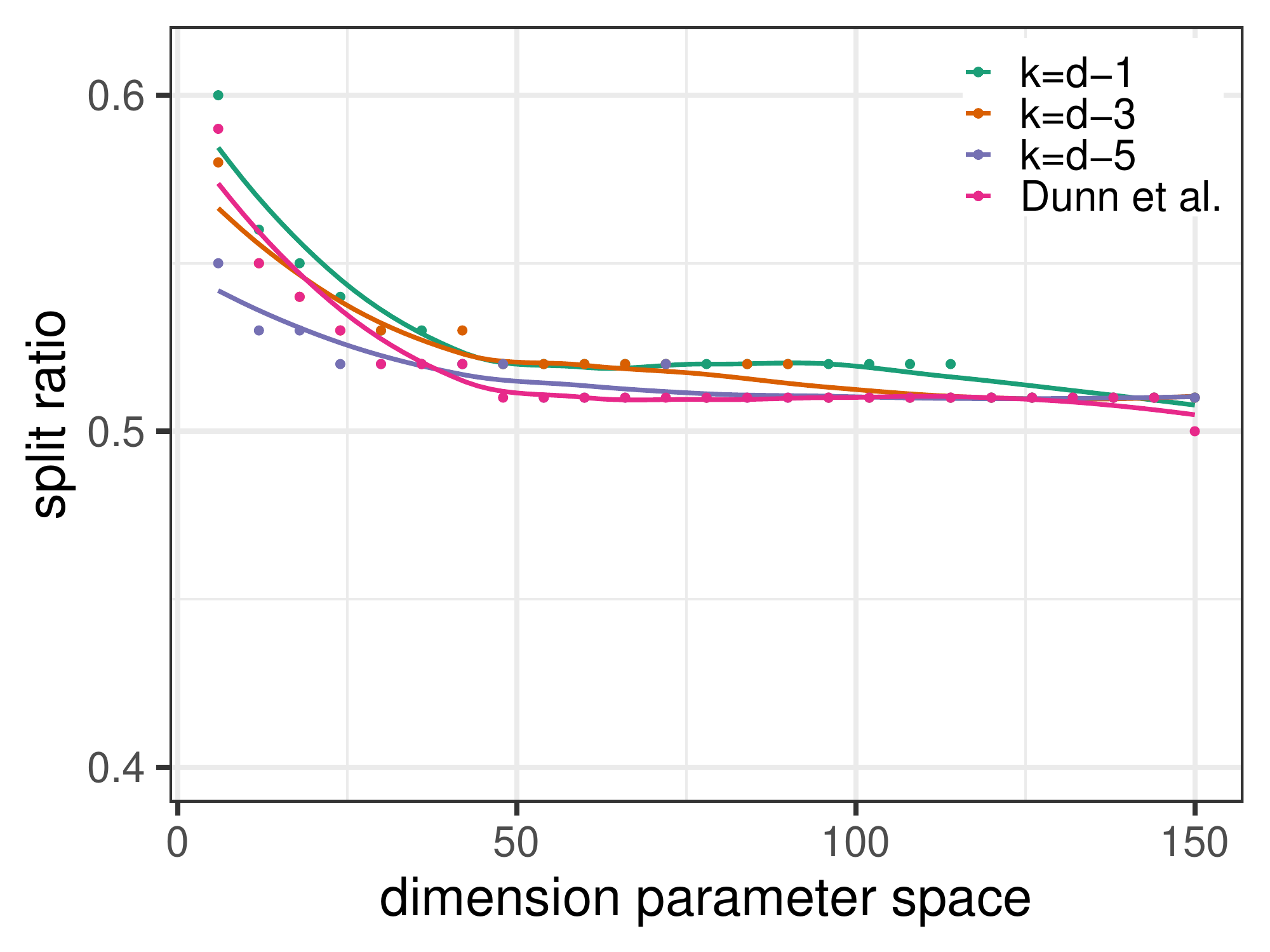}	    \includegraphics[width=0.49\linewidth]{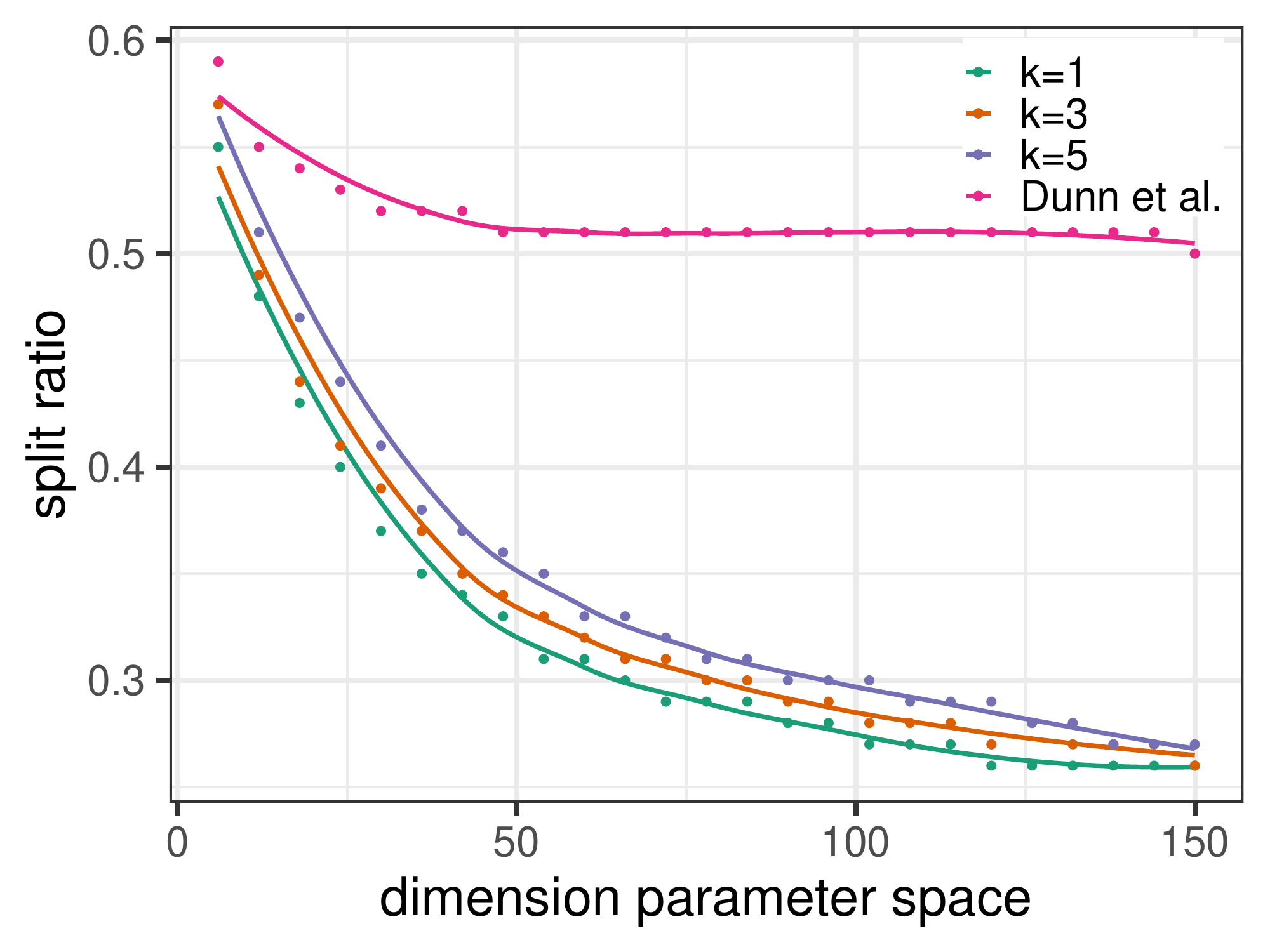}
	\includegraphics[width=0.49\linewidth]{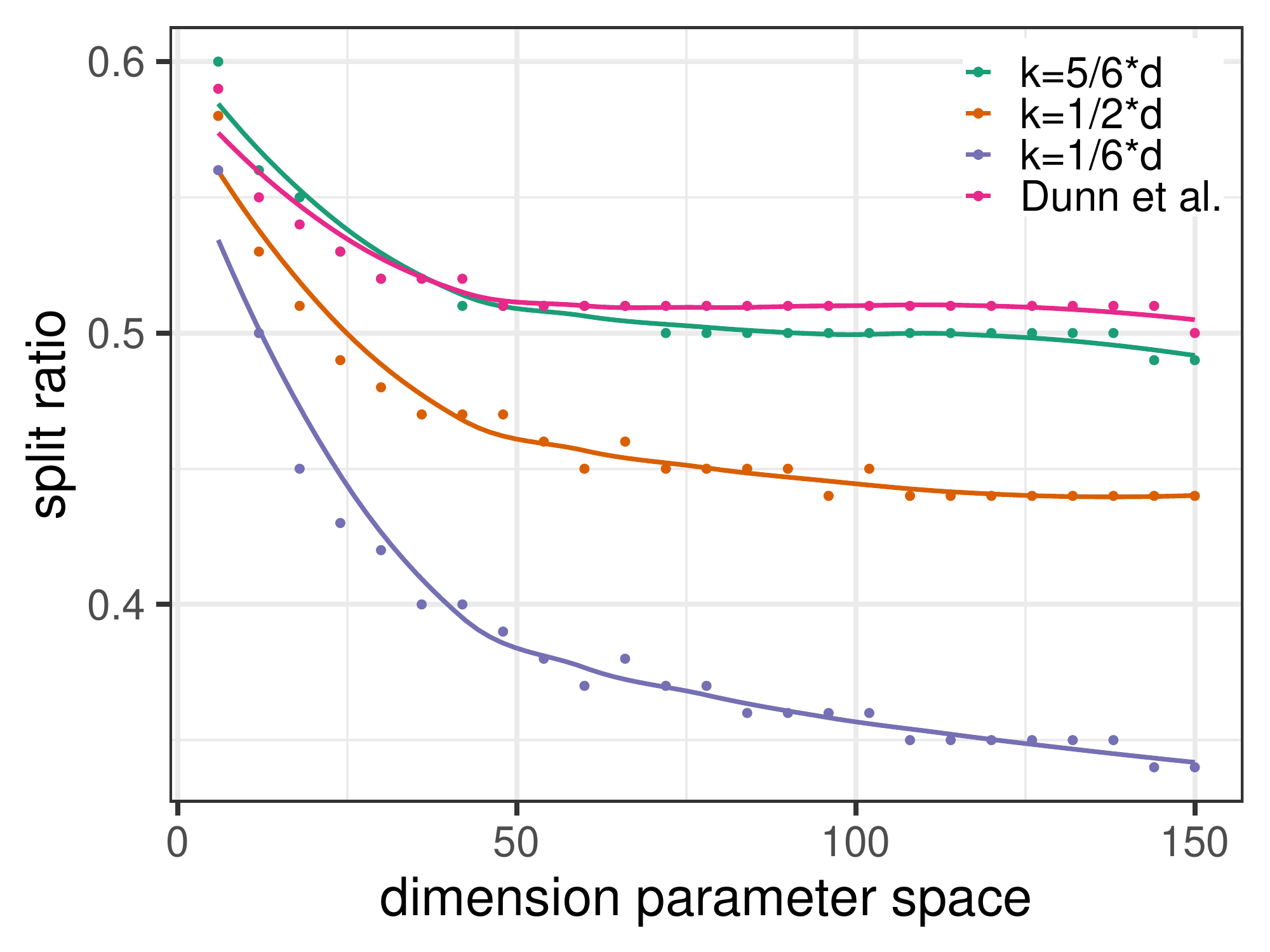}
	\caption{Optimal splitting ratio against dimension of parameter space.}\label{empiricalsplit}
	\end{figure}
	
	\change{Figure \ref{empiricalsplit} displays our results for the optimal choice of the splitting ratio against the dimension of the parameter space for different regimes of the null hypothesis space $k$. We observe that the underlying dimension of the null hypothesis $k$ is crucial for the optimal choice of the splitting ratio. For comparison Figure \ref{empiricalsplit} additionally shows the difference to the existing proposal for the choice of the splitting ratio by \citet{universal2}.} 
	To obtain a high power they propose to use the split ratio \begin{equation}\label{propsplit}
	  m_0=1-\frac{\sqrt{4d^2+8d\log(1/\alpha)}-2d}{4\log(1/\alpha)},
	\end{equation} 
	which minimizes the squared radius of the universal inference confidence set for the mean vector of a Gaussian distribution. In contrast to what our results suggest, their proposed split does not depend on the dimension of the null hypothesis.  This dimensionality is, however, incorporated in our proposed choice of the splitting ratio. 
	
	Note that analytically the split \eqref{propsplit}, proposed by \citet{universal2}, converges to $0.5$ for high dimensions $d$. \change{ In a similar fashion, our proposed splitting ratio converges to $0.5$ for high dimensions when testing a fixed number of parameters to be constant, that is $d-k$ is fixed. However, we can see that the optimal splitting ratio varies with the dimension of the tested hypothesis space, thus, e.g. in settings where the number of tested, fixed parameters in the hypothesis grows proportional with the dimension, a splitting ratios below $0.5$ is beneficial, even in the limit.}
	
	\change{
	Further, due to the impact of the noncentrality parameter and the complexity of the limit distribution, it is difficult to determine the optimal splitting ratio analytically. To avoid extensive Monte Carlo approximation for the calculation of the optimal splitting ratio, we additionally propose the following computationally fast alternative.} Instead of using extensive simulations to approximate the noncentral split chi-square distribution, we employ normal approximations with the expectation and variance calculated based on Corollary \ref{cor2}. This then leads to  Algorithm \ref{alg:opt}, which very quickly determines an optimal splitting ratio based on the dimension of the null hypothesis $k$, the dimension of the parameter space $d$, and the significance level $\alpha$ via repeated minimization of values of the standard Gaussian cdf (the \texttt{pnorm} function in \texttt{R}).

    \begin{algorithm}[t]
    \caption{Optimal splitting ratio}\label{alg:opt}
    \begin{algorithmic}
    \State \textbf{Input:} $d$, $p$, $\alpha$
    \State \textbf{initialize} $\delta$ small
    \State power $\gets 0.5$
    \While{power$<0.8$}
        \State exp($m_0$) $\gets$ $p-d-d\frac{m_0}{1-m_0}+m_0 \delta$\Comment{Define functions in split $m_0$}
        \State var($m_0$) $\gets$ $2(d-p) + 4d \frac{m_0}{1-m_0} + 2d \frac{m_0^2}{(1-m_0)^2} + 4m_0 \delta$
        \State target($m_0$) $\gets$ \texttt{pnorm}($-2 \log(\alpha$),exp($m_0$),$\sqrt{\textnormal{var($m_0$)}})$ \vspace{0.07cm}
        \State (min, value) $\gets$ \textbf{minimize} target($m_0$) \textbf{for} $m_0$ \textbf{in} $(0,1)$ \Comment{Optimization} 
        \State power $\gets$ $1-$value
        \State \textbf{increase} $\delta$
    \EndWhile
    \State \textbf{return} min
    \end{algorithmic}
    \end{algorithm}
	
	\change{Our simple algorithm constitutes a fast solution to obtain an optimal splitting ratio based on normal approximations of the limit distribution. Without considering the variance, our algorithm can be interpreted as maximizing the expectation of the limit distribution of the SLRT, see Corollary \ref{cor2}. However, this expectation depends on the distance of the local alternative $\delta$. Thus, to achieve a meaningful choice between the different splitting ratios, we first scaled $\delta$ to represent a setting where the test still has power against the local alternative but the task is nevertheless non-trivial. Furthermore, to get some more intuition for the behaviour of our proposed splitting ratio, Figure \ref{optimasym} provides a visual rule of thumb for the choice of the optimal splitting ratio based on the ratio of the dimension of the hypothesis space and the dimension of the parameter space derived in a high-dimensional setting. We found the impact of the significance level $\alpha$ in this high-dimensional setting to be negligible. For further simplicity and easy computation, this visual rule of thumb can be approximated with the function $m_0=-\exp(-2.7\tfrac{k}{d}-1.05)+0.52$.}
	
	In Section \ref{simulations2} we analyze the performance of both proposed methods, the Monte Carlo method and the normal approximation variant numerically and show that it outperforms the existing proposal by \citet{universal2} in regular settings. \change{Further, in Section \ref{section:powerirregular} we show that this splitting ratio leads to optimal power even in the investigated irregular settings}.
	
	\begin{figure}[t]
	\centering
    \includegraphics[width=0.48\linewidth]{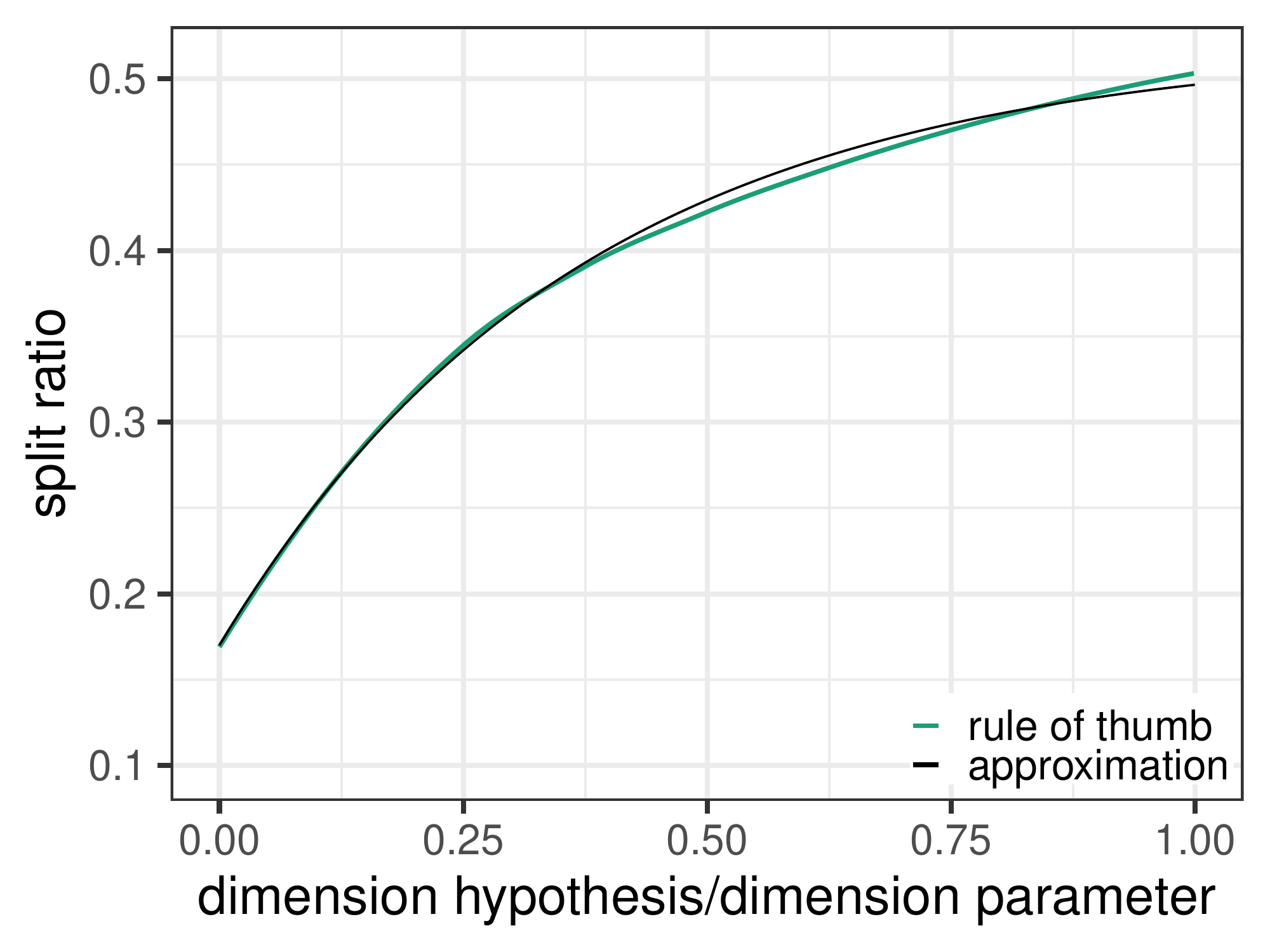}	\change{    
	\caption{Rule of thumb for the choice of the split ratio in high dimensions.}\label{optimasym}}
	\end{figure}


\section{Simulations} \label{simulations}
    We present the results of a simulation study that investigates the asymptotic behavior of the SLRT and compare its performance in different model settings, namely, regular and irregular settings, different dimensions of the parameter space $d$, different dimensions of the null hypothesis space $k$, and different splitting ratios $m_0$. All reported quantities are computed from simulations with 100,000 replications and if not stated otherwise we use the significance level $\alpha=0.05$.

\subsection{Power of the SLRT in regular setting}\label{simulations1}

    \begin{figure}[t]
		\centering
		\includegraphics[width=0.75\linewidth]{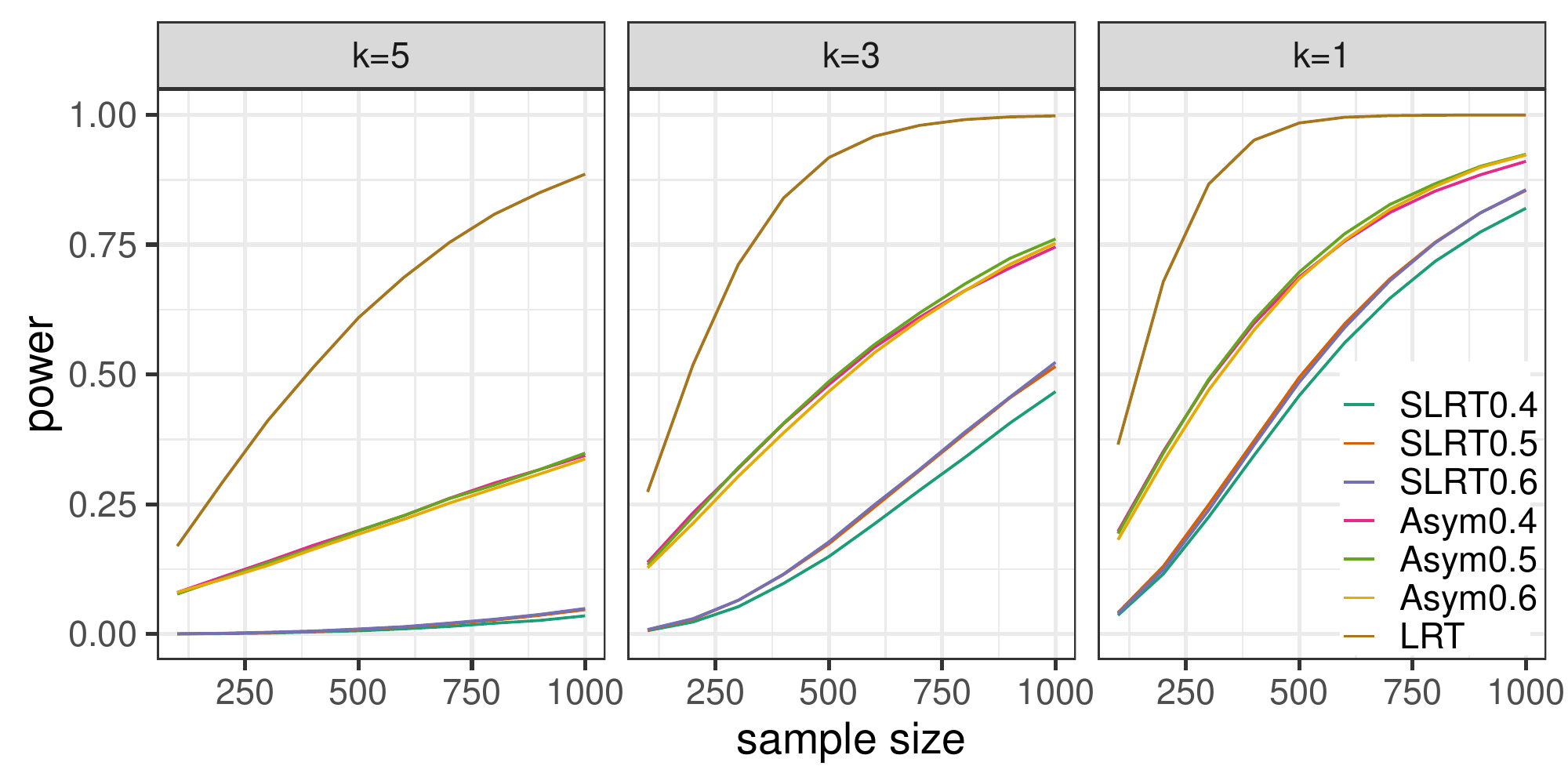}
		\caption{Power against sample size in Gaussian setting with $\theta=0.1$, $d=6$.}  \label{powern}

        \centering
		\includegraphics[width=0.75\linewidth]{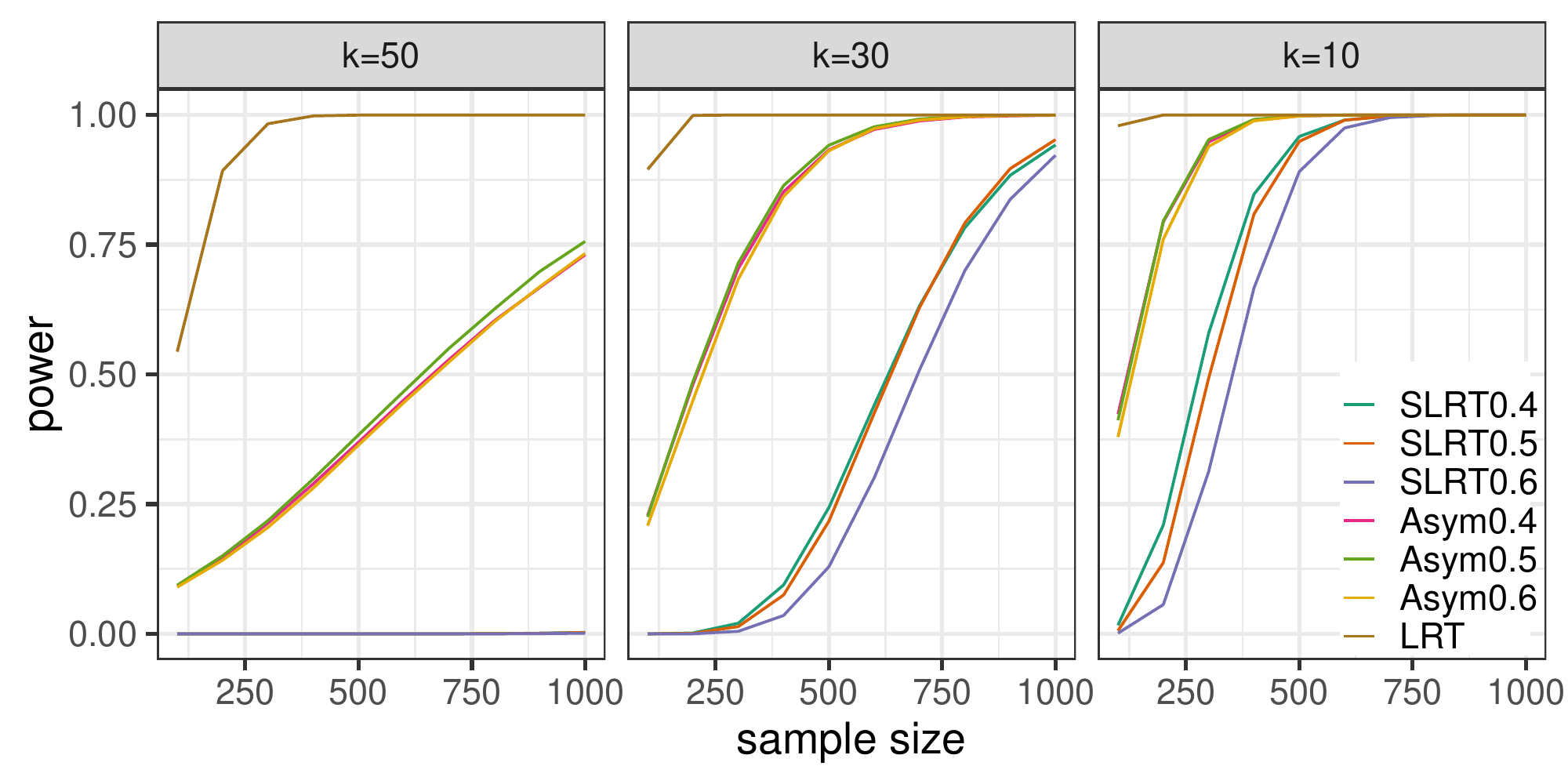}
		\caption{Power against sample size in Gaussian setting with $\theta=0.1$, $d=60$.}  \label{powern2}
    \end{figure}
    
    How much does using the universal threshold cost in terms of power asymptotically? In the following, we explore this question by comparing the power of the SLRT using the two different critical values, the  standard universal threshold (\texttt{SLRT}) and the quantile of the asymptotic distribution (\texttt{Asym}).  To this end we consider samples from a $d-$dimensional multivariate standard normal distribution with mean vector $(\theta,\dots,\theta)$  with $\theta=0.1$ and test the hypothesis that the first $d-k$ entries of $\theta$ equal zero.
	
    Figures \ref{powern} and \ref{powern2} display the (simulated) power of both variants as well as that of the classical LRT against the sample size. In this regular setting where the classical asymptotic distribution theory holds, the LRT outperforms the SLRT also when using the asymptotically correct quantiles. Furthermore, we see again that the power loss from using the universal threshold is larger in higher dimensions and higher dimensional null hypothesis settings. The simulations show that the choice of the splitting ratio plays an important role in the performance of the SLRT, especially in higher dimensional settings. In the following, we further examine the impact of the optimal choice of the splitting ratio in simulations.

	
	\subsection{Influence of the splitting ratio}\label{simulations2}

	\begin{figure}[t]
        \centering
		\includegraphics[width=0.75\linewidth]{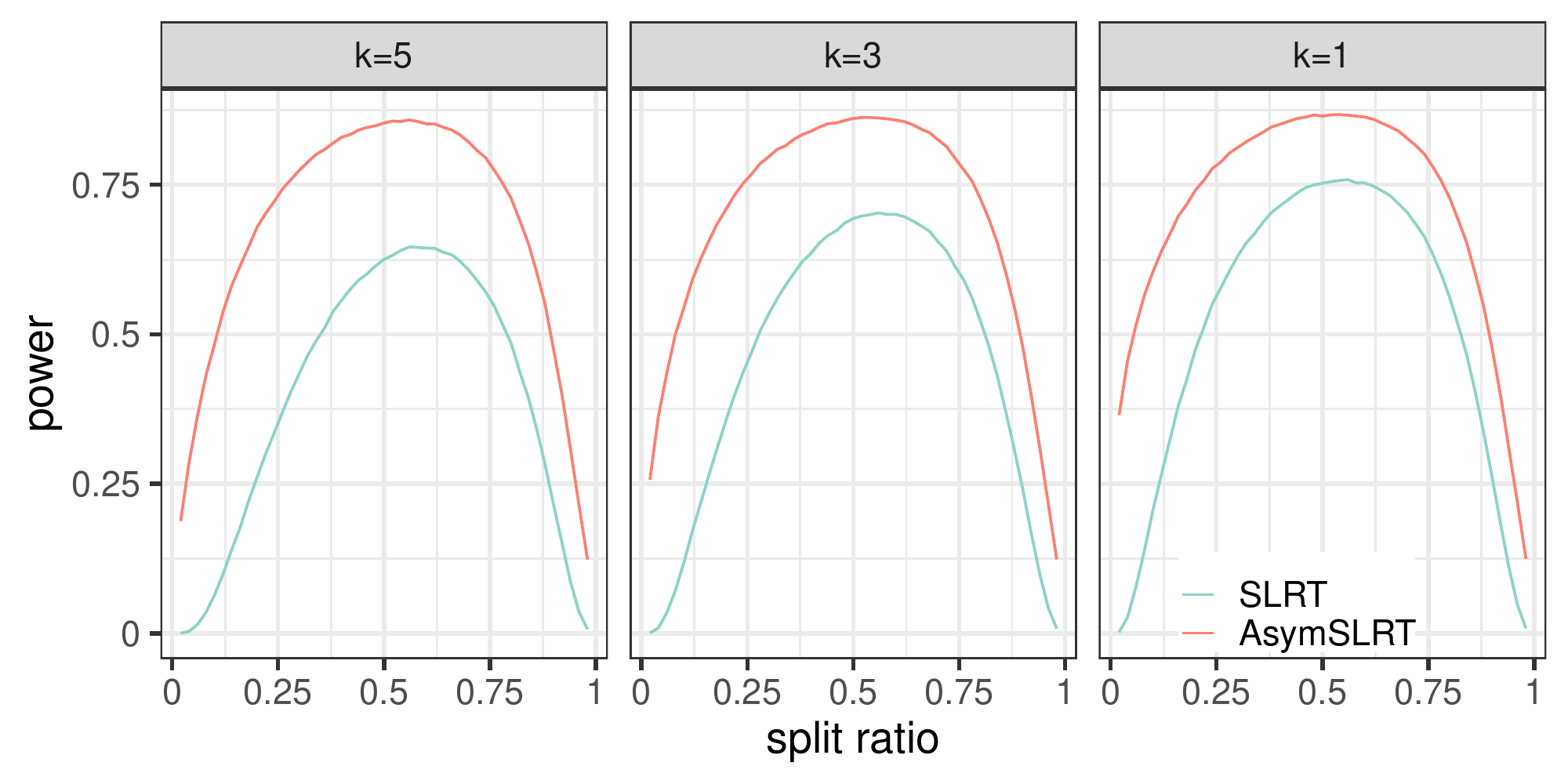}
			\caption{Power of SLRT against splitting ratio, $d=6$, $\delta=40$.} \label{powersplit}

        \centering
	    \includegraphics[width=0.75\linewidth]{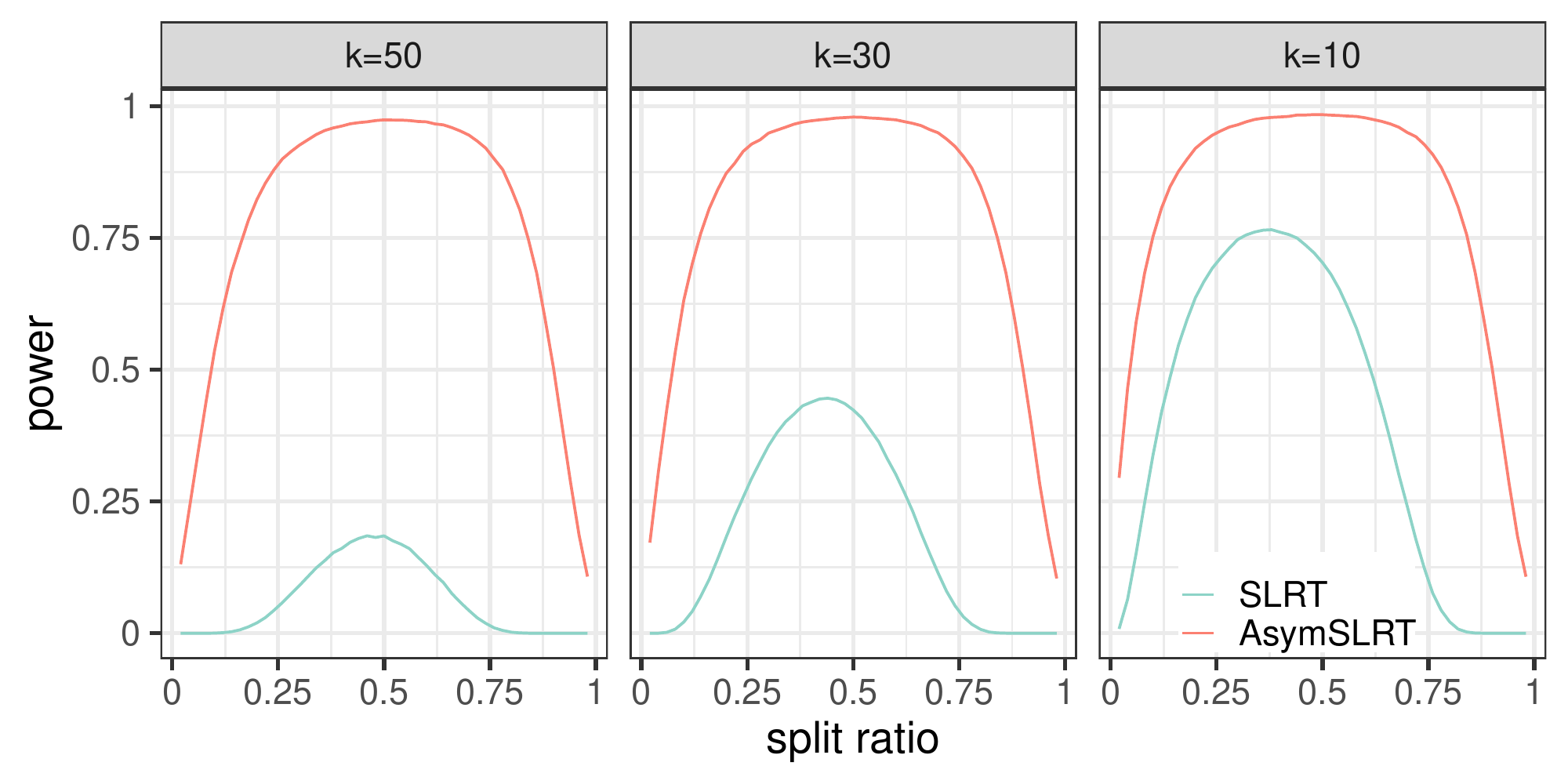}
		\caption{Power of SLRT against splitting ratio, $d=60$, $\delta=180$.} \label{powersplit2}
	\end{figure}

	In the following experiments, we analyze the influence of the splitting ratio on the asymptotic power of the SLRT.  To this end, we sample data from a noncentral split chi-square distribution and calculate the power for testing the hypothesis of a  zero noncentrality parameter $\delta$. Figures \ref{powersplit} and \ref{powersplit2} show the (simulated) power against the splitting ratio for the two different critical values, the universal threshold (\texttt{SLRT}) and the asymptotic quantile (\texttt{Asym}). We can see that in the lower dimensional setting a splitting ratio above $0.5$ performs best while in the higher dimensional setting a smaller splitting ratio below $0.5$ seems beneficial, especially for a lower dimensional null hypothesis.

	
    Figure \ref{poweropti} quantifies the improvement in power that can be achieved with our proposed (empirical) optimal splitting ratio (\texttt{emp.optim}) and the fast estimation routine (\texttt{est.optim}) that uses a normal approximation instead of extensive simulations to approximate the power of the SLRT. We compare the power for different noncentrality parameters $\delta \in \{100, 250\}$ plotted as 'dashed' and 'solid' lines respectively. Figure \ref{poweropti} displays that our fast estimation routine of the optimal split leads to valid approximations with a similarly good performance as the empirical optimal splitting ratio and further that there is a notable gain in power by using our new proposed optimal splitting ratios compared to the split \eqref{propsplit} by \citet{universal2}, especially in higher dimensions. 
    
    \begin{figure}[t]
    \centering
    \includegraphics[width=0.49\linewidth]{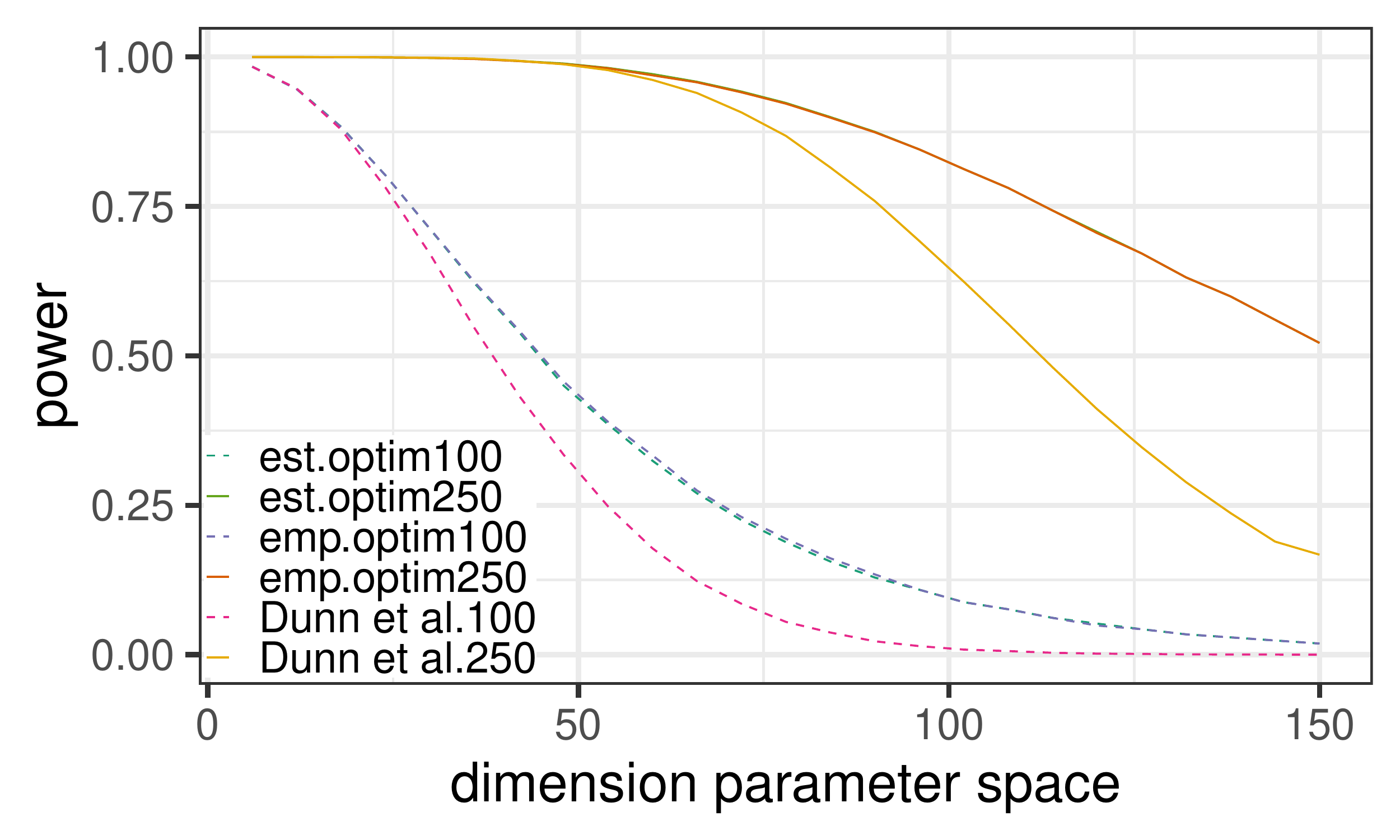}
	\includegraphics[width=0.49\linewidth]{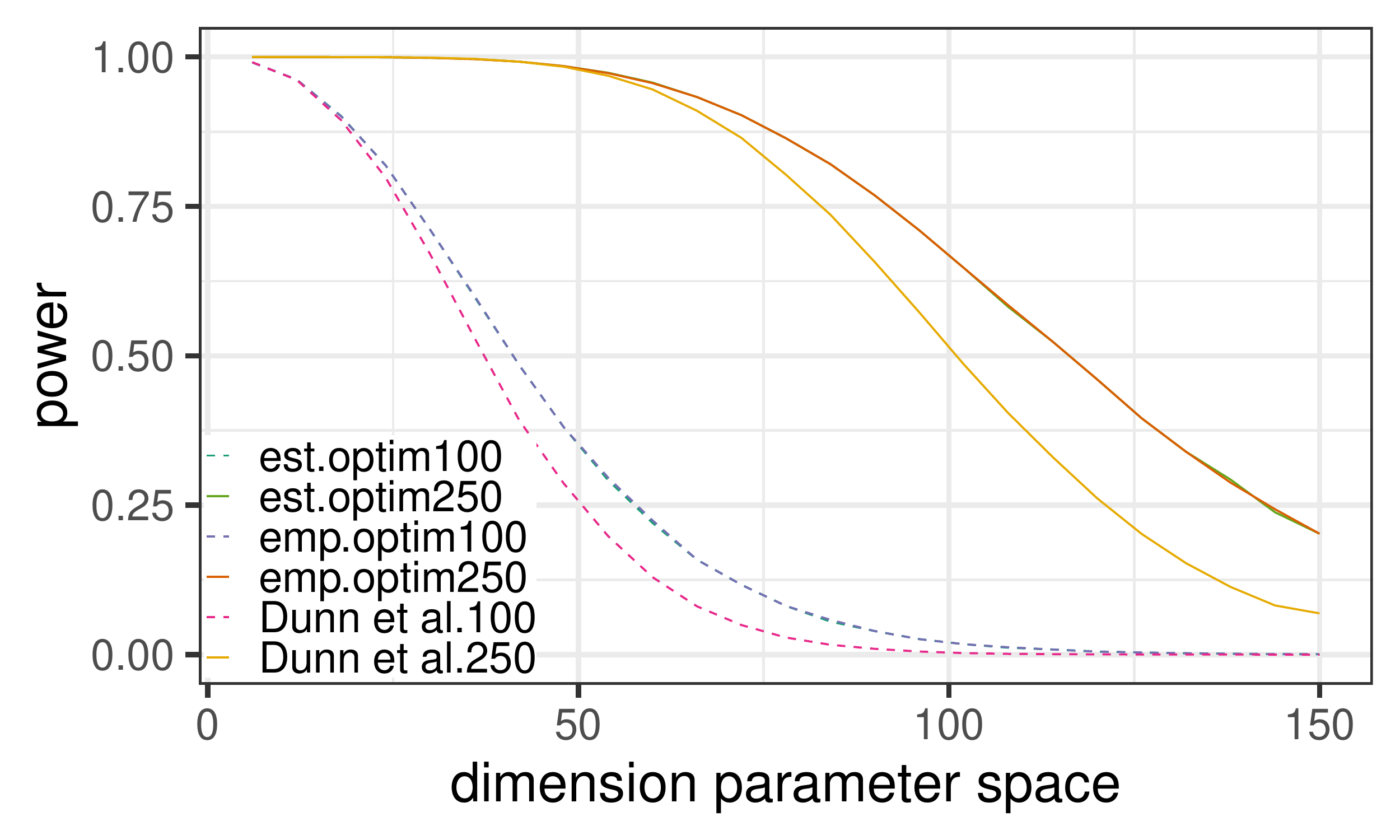}
	\caption{Power for fixed noncentrality parameter and $k=5$ (left); $k=\frac{1}{6} \,d$ (right).} \label{poweropti}
    \end{figure}
    
    This is even more apparent in Figure
    \ref{poweropticonstant}, where we calculated the power for two different regimes of increasing noncentrality parameter $\delta$. For each dimension of the parameter space, we chose the smallest $\delta$ such that the test with our new proposed optimal splitting ratio achieves a power of $0.8$ and $0.65$ respectively. While our methods, therefore, keep the power level, the split from~\eqref{propsplit} 
    leads to a rapid loss
    of power in higher dimensions.
    
    \begin{figure}[t]
	\centering
	\includegraphics[width=0.49\linewidth]{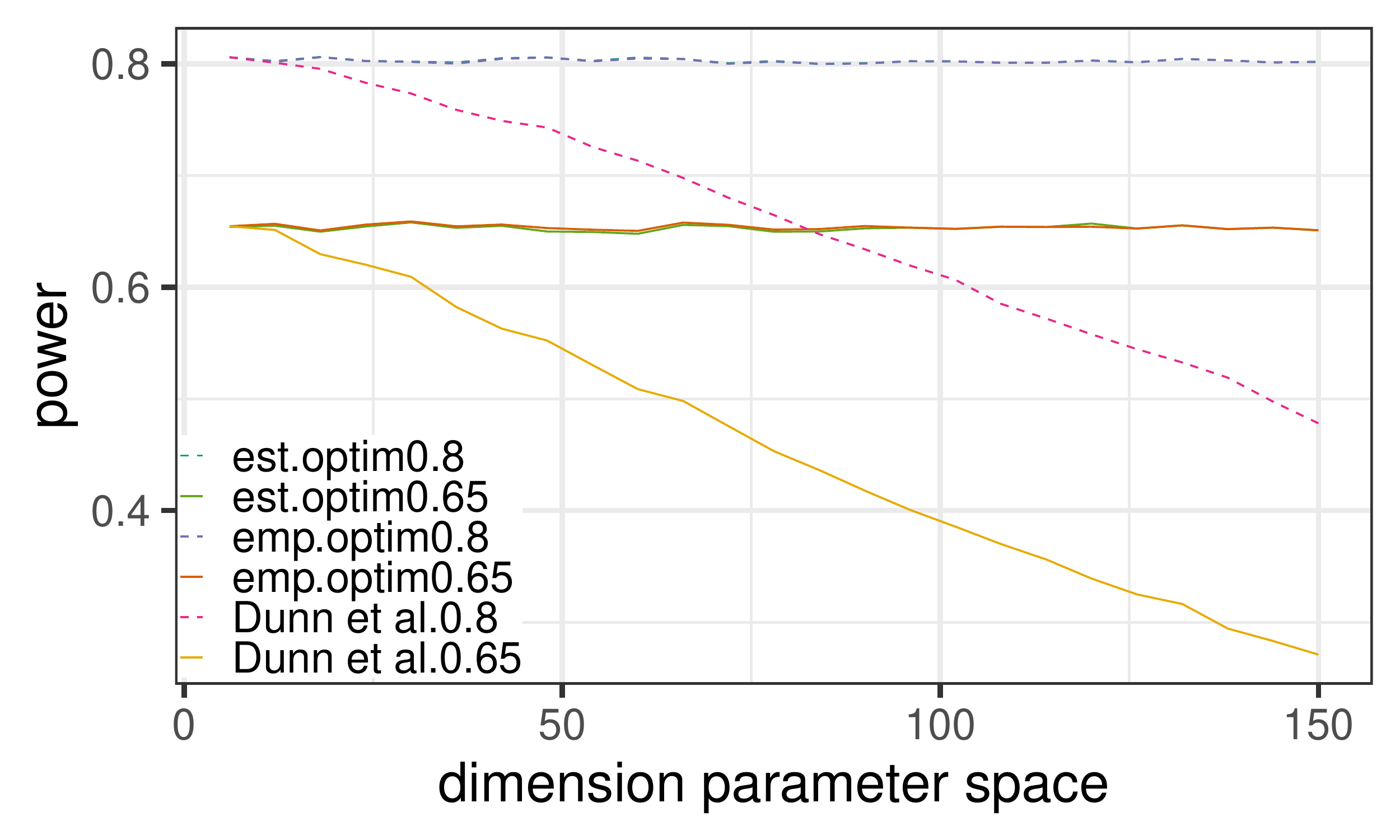}
    \includegraphics[width=0.49\linewidth]{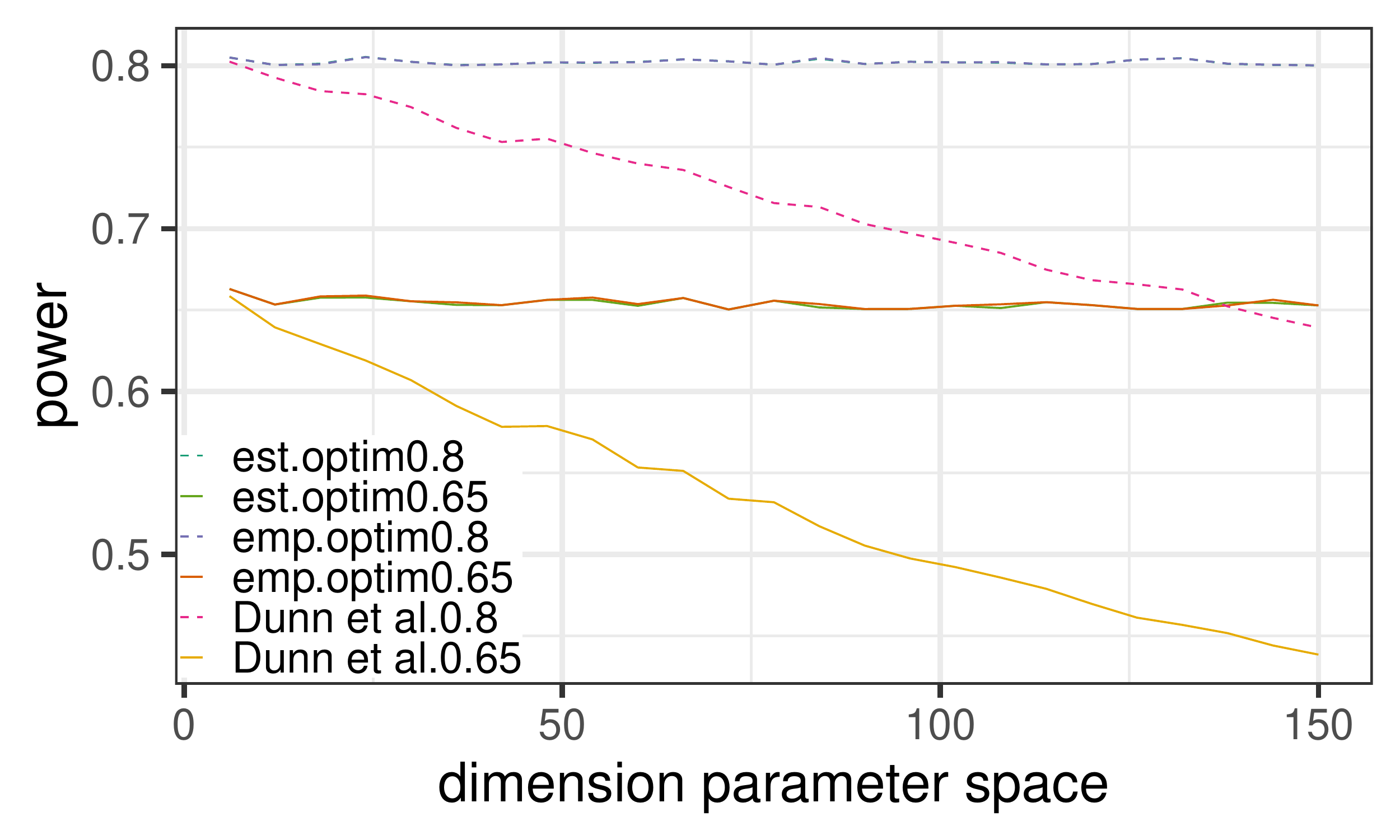}
	\caption{Power for increasing noncentrality parameter and $k=5$ (left); $k=\frac{1}{6} \,d$ (right)} \label{poweropticonstant}
	\end{figure}    
    
    \change{
    \subsection{Power of the SLRT in irregular setting}\label{section:powerirregular}
    Our proposed optimal splitting ratio is based on the limit distribution of the SLRT obtained under regularity conditions. In the following, we show that this optimal choice of the splitting ratio leads to an improvement in power even in irregular settings. To this end, we investigate the performance of the SLRT in the one-factor analysis setting with 12 observed variables. Assuming zero means, the one-factor model is given by the family of normal distributions $\mathcal{N}_{12}(0,\Sigma)$ with $\Sigma \in F_{12,1} := \{ \Omega + \Gamma \Gamma^T : \Omega \in \mathbb{R}^{12 \times 12}_{>0} \text{ diagonal, } \Gamma \in \mathbb{R}^{12} \}$. In the following experiment, we consider testing the one-factor model against the saturated alternative, that is, the entire cone of positive definite matrices $PD(12)$. Note that the hypothesis defines a $24$-dimensional subset of the $78$-dimensional parameter space, thus, our algorithm suggests using the optimal splitting ratio $0.41$ while \eqref{propsplit} suggests using the splitting ratio $0.51$. \citet{drton:2009} shows that in this one-factor analysis setting the hypothesis space has singularities at loadings $\Gamma$ with less than 3 nonzero values and thus, at those points, classic asymptotic distribution theory for likelihood ratio tests is not valid. }
    
    \begin{figure}[t]
	\centering
    \includegraphics[width=0.65\linewidth]{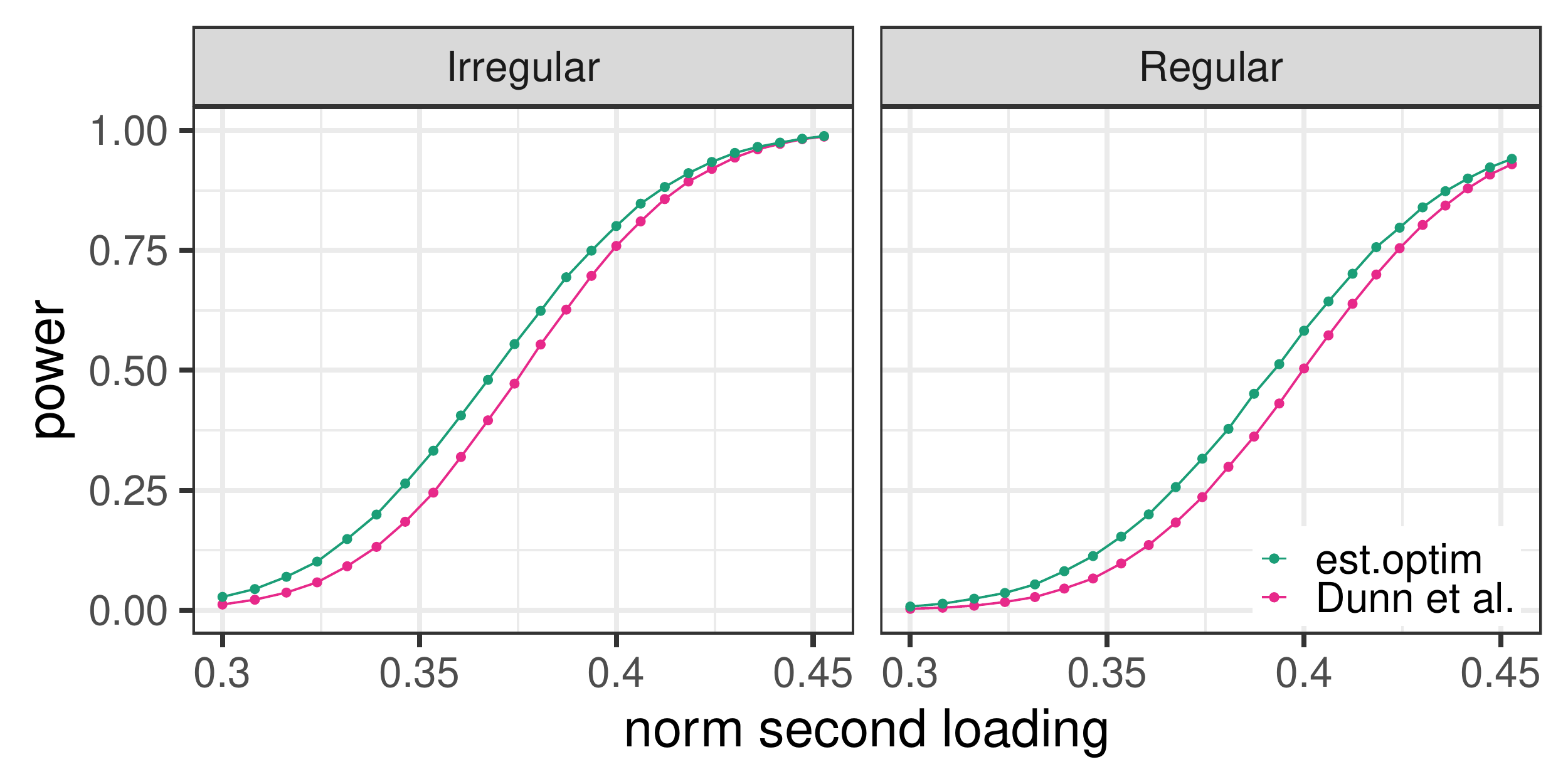}	\change{    
	\caption{Power of SLRT in one-factor analysis.} \label{factanal}}
	\end{figure}
    
    \change{
    Figure \ref{factanal} displays the power of the SLRT under alternatives around irregular and regular points using the two different splitting ratios. More specifically, we set $\Omega$ as the diagonal matrix with all diagonal entries equal $1/5$ and  $\Gamma=(5,5,0,...,0)$ for the irregular and $\Gamma=(5,5,5,0, ... ,0)$ for the regular setting, respectively. Then we sampled $n=2000$ data points from a two-factor alternative $\mathcal{N}_{12}(0,\Sigma)$ with $\Sigma=\Omega + \Gamma \Gamma^T + \Gamma_2 \Gamma_2^T$, where $\Gamma_2=(h/\sqrt{12}, ... ,h/\sqrt{12})$ with varying values of the norm $h$ of the second factor loading $\Gamma_2$. We observe, that the optimality of our proposed splitting ratio extends to this irregular setting and our splitting ratio outperforms the splitting ratio suggested by \citet{universal2} in all settings. }
    
\change{
\section{Extension to the cross-fit SLRT}
The SLRT and the universal inference framework starts with randomly splitting the available data. Thus, the value of the split likelihood ratio statistic still varies given a fixed data set, depending on the random split. It is natural to think about reducing this randomness by aggregating the results of different splits at the cost of performing more computations. Already in their initial work \citet{universal} propose the cross-fit SLRT, a variant of the SLRT where the test statistic is calculated twice with alternating roles of the two data sets.  Then, both results are averaged to obtain the cross-fit split likelihood ratio statistic $\tfrac{1}{2}(\Lambda_n+\Lambda_n^{swap})$, where $\Lambda_n^{swap}$ is defined by \eqref{split likelihood ratio stat} with the roles of $D_0$ and $D_1$ swapped.}

\change{
Similar to the proof of Theorem \ref{theorem1} we can derive the asymptotic distribution of the cross-fit split likelihood ratio statistic.
\begin{corollary}\label{corollarycross}
	Under assumptions (A1)-(A3), the cross-fit split likelihood ratio statistic satisfies under  $P_{\theta_n}$ with  $\theta_n=\theta_0 + h/\sqrt{n}$
    \begin{align*}
	\Lambda_n+\Lambda_n^{swap}
		\,\overset{\mathcal{D}}{\longrightarrow}\; &\Vert X + \sqrt{m_0}I(\theta_0)^{1/2}h-I(\theta_0)^{1/2} H_0\Vert^2 - \Vert X- \sqrt{\tfrac{m_0}{m_1}} Y \Vert^2 \\&+\Vert Y + \sqrt{m_1}I(\theta_0)^{1/2}h-I(\theta_0)^{1/2} H_0\Vert^2 - \Vert Y- \sqrt{\tfrac{m_1}{m_0}} X \Vert^2,
    \end{align*}
    where $X,Y \sim \change{\mathcal{N}_d(0,\mathrm{Id})}$ independent and $\Vert x - H_0 \Vert=\inf_{h \in H_0} \Vert x - h \Vert$.
\end{corollary}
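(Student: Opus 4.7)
The plan is to mirror the proof of Theorem \ref{theorem1} twice: once for $\Lambda_n$ (already done) and once for $\Lambda_n^{swap}$, and then combine the two limits using the joint convergence of the score averages $(G_{n,0},G_{n,1})$ that was already established. Concretely, introduce the swapped MLEs $\widetilde{\theta}_{n,0}:=\arg\max_{\theta\in\Theta}\ell_0(\theta)$ and $\widetilde{\theta}_{n,1}:=\arg\max_{\theta\in\Theta_0}\ell_1(\theta)$, so that
\[
\Lambda_n^{swap}=2\bigl(\ell_1(\widetilde{\theta}_{n,0})-\ell_1(\widetilde{\theta}_{n,1})\bigr).
\]
By assumption (A2) applied with the roles of $D_0,D_1$ exchanged, both of these are $\sqrt n$-consistent for $\theta_0$, and the local asymptotic normality expansion \eqref{step1}, applied on $D_1$ with the sample size $m_1n$, remains valid.

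Repeating the algebraic manipulations from the proof of Theorem \ref{theorem1} verbatim but with $(D_0,D_1,m_0,m_1)$ replaced by $(D_1,D_0,m_1,m_0)$, I obtain
\begin{align*}
\Lambda_n^{swap}&=\Vert I(\theta_0)^{-1/2}G_{n,1}-I(\theta_0)^{1/2}[H_{m_1n}\cap B(M_n)]\Vert^2\\
&\qquad -\Vert I(\theta_0)^{-1/2}G_{n,1}-\sqrt{\tfrac{m_1}{m_0}}I(\theta_0)^{-1/2}G_{n,0}\Vert^2+o_{P_{\theta_0}}(1),
\end{align*}
where I have used Theorem 5.39 of \cite{vdv} to write the swapped full-model MLE $\widetilde\theta_{n,0}$ as asymptotically linear in $G_{n,0}$. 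Crucially, both expansions for $\Lambda_n$ and $\Lambda_n^{swap}$ live on the same probability space and depend on the data only through the pair $(G_{n,0},G_{n,1})$, together with a set sequence that converges to $I(\theta_0)^{1/2}H_0$ under (A3).

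From \eqref{step2} in the proof of Theorem \ref{theorem1}, the joint vector $(G_{n,0},G_{n,1})$ converges under $P_{\theta_n}$ to independent Gaussians with means $\sqrt{m_0}I(\theta_0)h$ and $\sqrt{m_1}I(\theta_0)h$ and common covariance $I(\theta_0)$. Writing $X:=I(\theta_0)^{-1/2}G_{n,0}-\sqrt{m_0}I(\theta_0)^{1/2}h$ and $Y:=I(\theta_0)^{-1/2}G_{n,1}-\sqrt{m_1}I(\theta_0)^{1/2}h$, the limits $X,Y\sim\mathcal{N}_d(0,\mathrm{Id})$ are independent and appear simultaneously in the two expansions. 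Applying the continuous mapping theorem together with Lemma 7.13 of \cite{vdv} (distance to a convergent sequence of sets is continuous in distribution) to the sum $\Lambda_n+\Lambda_n^{swap}$ yields the claimed limit.

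The only nontrivial step is checking that the swapped version of the sup-expansion remains valid: one must verify that $\widetilde\theta_{n,1}$, the null-model MLE on $D_1$, is $\sqrt n$-consistent and that its local rescaling $\sqrt{m_1n}(\widetilde\theta_{n,1}-\theta_0)$ eventually lies in $H_{m_1n}\cap B(M_n)$ with probability tending to one; both follow from (A2) and the usual envelope argument with $M_n\to\infty$ slowly enough, exactly as in the proof of Theorem \ref{theorem1}. Everything else is bookkeeping: the contiguity argument via Le Cam's lemmas in Theorem \ref{theorem1} already transports the joint limit of $(G_{n,0},G_{n,1})$ from $P_{\theta_0}$ to $P_{\theta_n}$, and no additional Le Cam step is needed for the swapped statistic.
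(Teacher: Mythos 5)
Your proposal is correct and follows essentially the same route as the paper, which itself only remarks that the corollary is obtained ``similar to the proof of Theorem \ref{theorem1}'': you expand $\Lambda_n^{swap}$ with the roles of $D_0$ and $D_1$ (and of $m_0$ and $m_1$) exchanged, observe that both expansions are functions of the same pair $(G_{n,0},G_{n,1})$ whose joint limit under $P_{\theta_n}$ was already established via Le Cam's third lemma, and conclude by the continuous mapping argument together with Lemma 7.13 of \cite{vdv}. The resulting four terms match the stated limit exactly (the centering by $\sqrt{m_0}I(\theta_0)^{1/2}h$ and $\sqrt{m_1}I(\theta_0)^{1/2}h$ cancels in both cross terms), so no further steps are needed.
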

Since this cross-fit variant of the SLRT only splits the data once and then uses the same data sets twice with alternating roles, the limit distribution is still generated by two independent random variables. Analogous to the SLRT, we can thus calculate the expectation and variance of the arising limit distribution in the smooth setting of Corollary \ref{cor1}, where the limiting hypothesis is a coordinate subspace, in the same way as the proof of Corollary \ref{cor2} by exploiting properties of quadratic forms.
\begin{corollary}\label{cor:moments}
    In the smooth setting of Corollary \ref{cor1} the expectation and variance of the limit distribution of the cross-fit split likelihood ratio test statistics is given by
    \begin{enumerate}
			\item 
			$
			\E[	\tfrac{1}{2}(\Lambda_{\infty}+\Lambda_{\infty}^{swap}) ]=	p-d-\frac{1}{2}d\big(\frac{m_0}{1-m_0}+\frac{1-m_0}{m_0}\big)+\frac{1}{2} \delta,
			$
			\item $
		    \mathrm{Var}[\tfrac{1}{2}(\Lambda_{\infty}+\Lambda_{\infty}^{swap})]=	(d-p)\big(1+ \frac{m_0}{1-m_0} + \frac{1-m_0}{m_0}\big) + d\big(2+ \frac{m_0}{1-m_0} + \frac{1-m_0}{m_0}\big)$\\ \hspace*{3.1cm} $\quad + \frac{1}{2}d \Big(\frac{m_0^2}{(1-m_0)^2}+\frac{(1-m_0)^2}{m_0^2}\Big) + \delta,
			$
			\end{enumerate}
   where $\delta=[I(\theta_0)^{1/2}h]_{[p]}^T[I(\theta_0)^{1/2}h]_{[p]}.$
\end{corollary}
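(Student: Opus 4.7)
The plan is to mirror the quadratic-form strategy from the proof of Corollary~\ref{cor2}. Because $I(\theta_0)^{1/2}H_0=\{0\}^p\times\R^k$ is a coordinate subspace, the argument of Corollary~\ref{cor1} applied to each of the two infimum terms in Corollary~\ref{corollarycross} yields $\Vert X+\sqrt{m_0}I(\theta_0)^{1/2}h-I(\theta_0)^{1/2}H_0\Vert^2 = \Vert X_{[p]}+\sqrt{m_0}\tilde h\Vert^2$ and the analogous identity for the $Y$-based term with $\sqrt{m_1}$, where $\tilde h = [I(\theta_0)^{1/2}h]_{[p]}$ and $\tilde h^T \tilde h=\delta$. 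So the limiting random variable can be taken to live on a single stacked Gaussian $Z=(X^T,Y^T)^T\sim\mathcal{N}_{2d}(0,\mathrm{Id})$.

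Next I would expand all four squared norms into their components to rewrite the limit as $T := Z^T A Z + 2 b^T Z + c$. Here $c = m_0\delta+m_1\delta = \delta$; the vector $b\in\R^{2d}$ places $\sqrt{m_0}\tilde h$ in its first $p$ entries and $\sqrt{m_1}\tilde h$ in entries $d+1,\dots,d+p$, so that $b^T b = \delta$; and $A$ is a symmetric $2d\times 2d$ matrix assembled from the diagonal contributions $-\mathrm{Id}$ coming from $-\Vert X\Vert^2,-\Vert Y\Vert^2$, the scalings $-\tfrac{m_1}{m_0}\mathrm{Id}$ and $-\tfrac{m_0}{m_1}\mathrm{Id}$ coming from $-\Vert X-\sqrt{m_0/m_1}Y\Vert^2$ and $-\Vert Y-\sqrt{m_1/m_0}X\Vert^2$, and an $X$--$Y$ cross term of magnitude $\sqrt{m_0/m_1}+\sqrt{m_1/m_0}$. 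With $Z\sim\mathcal{N}(0,\mathrm{Id})$, the standard identities $\E[Z^T A Z]=\mathrm{tr}(A)$ and, because odd polynomials in $Z$ are uncorrelated with quadratic ones, $\mathrm{Var}(T)=2\,\mathrm{tr}(A^2)+4b^T b$ then produce the mean and variance of $T$; dividing by $2$ and $4$ respectively recovers the moments of $\tfrac{1}{2}(\Lambda_\infty+\Lambda_\infty^{swap})$.

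The only real obstacle is the bookkeeping for $\mathrm{tr}(A^2)$. Reordering the coordinates of $Z$ into the $p$-dimensional ``constrained'' directions and the $k$-dimensional ``free'' directions, $A$ decouples into two independent blocks of sizes $2p\times 2p$ and $2k\times 2k$; each is a $2\times 2$ block matrix with diagonal blocks $\alpha\,\mathrm{Id}$, $\beta\,\mathrm{Id}$ and off-diagonal blocks $\gamma\,\mathrm{Id}$, for scalars $\alpha,\beta,\gamma$ that are simple rational functions of $m_0$ and $m_1$. Computing the traces of the two squared scalar blocks is then elementary; after using the identity $\bigl(\sqrt{m_0/m_1}+\sqrt{m_1/m_0}\bigr)^2 = m_0/m_1+m_1/m_0+2$ and collecting powers of $m_0/(1-m_0)$ and $(1-m_0)/m_0$, the stated formulas for $\E[\tfrac12(\Lambda_\infty+\Lambda_\infty^{swap})]$ and $\mathrm{Var}[\tfrac12(\Lambda_\infty+\Lambda_\infty^{swap})]$ drop out.
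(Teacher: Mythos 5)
Your proposal is correct and follows essentially the same route as the paper, which proves this corollary exactly as it proves Corollary~\ref{cor2}: represent the limit as a quadratic form in a single $2d$-dimensional Gaussian and read off mean and variance from the standard trace identities (the paper absorbs the linear term into a mean shift $\mu$ and a non-identity covariance, whereas you keep $Z\sim\mathcal{N}_{2d}(0,\mathrm{Id})$ and carry $2b^TZ+c$ explicitly, which is an equivalent parametrization). Your block bookkeeping checks out: with $\gamma^2=m_0/m_1+m_1/m_0+2$ the two scalar blocks give $\tfrac12\mathrm{tr}(A^2)+b^Tb$ equal to the stated variance, and the expectation follows directly.
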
}
\change{
Furthermore, analogously to Section \ref{subsec:optimal}, we can employ the limit distribution to determine the optimal splitting ratio and obtain the intuitive result that for the cross-fit SLRT an even split of $m_0=0.5$ is optimal in all dimensions. 
\begin{remark} \label{remark:crossfit}
    For an equal splitting ratio $m_0=0.5$ the cross-fit split likelihood ratio statistic has the same expectation but a lower variance than the split likelihood ratio statistic in the limit. Thus, in situations where the method has power, the cross-fit SLRT further improves upon the SLRT. 
\end{remark}
}

\change{The cross-fit SLRT employs equal weights $w_0=0.5$ to combine both test statistics and obtain the cross-fit split likelihood ratio test statistic, that is $w_0\Lambda_n+(1-w_0)\Lambda_n^{swap}$. Using equal weights is intuitive for an even splitting ratio $m_0=0.5$, since both test statistics are expected to perform similarly. However, considering the results from the previous section, for different splitting ratios, it might be beneficial to vary the weights and emphasize the better-performing test statistic. Furthermore, the idea of the cross-fit SLRT of swapping the roles of the two data sets allows us to derive properties of the limit distribution and is conceptually simple. Nevertheless, in view of our previous analysis, subsampling provides a promising alternative. Instead of using the same split data set with swapped roles to calculate the second test statistic, we repeat the process of randomly splitting the available data. Such a method has a similar computational burden as the cross-fit SLRT, however, we can make use of the optimal splitting ratio for both test statistics. Note that this subsampling procedure can be extended to multiple repeats and thus, further decrease the randomness of the splits at the cost of computation time. }

 \begin{figure}[t]
	\centering
    \includegraphics[width=0.65\linewidth]{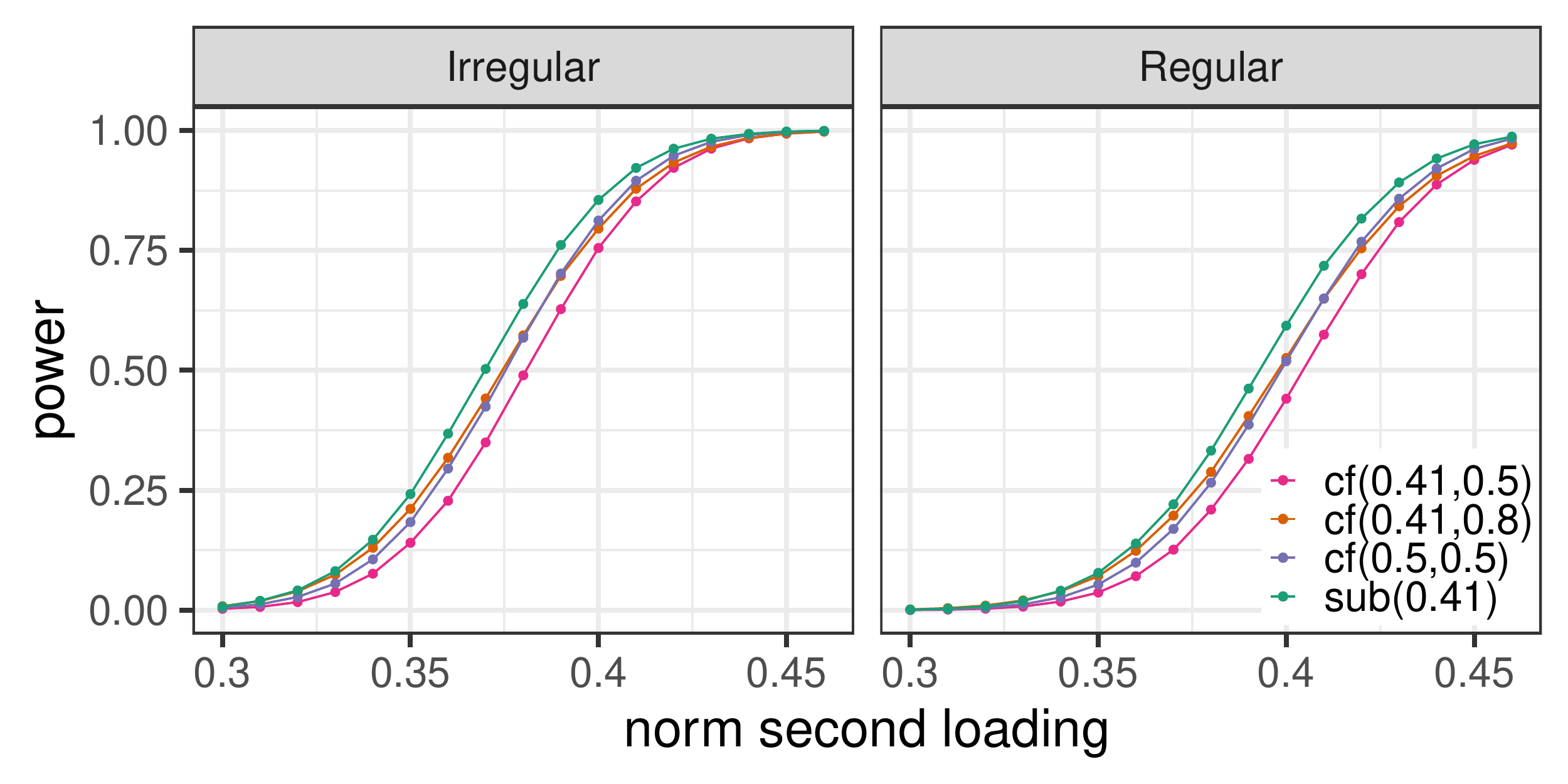}	\change{    
	\caption{Power of cross-fit($m_0$, $w_0$) and subsampling SLRT in one-factor analysis.} \label{factanalcross}}
\end{figure}

\change{
Figure {\ref{factanalcross}} compares the performance of the different extensions of the SLRT in the one-factor analysis setting introduced in Section {\ref{section:powerirregular}}. We display the power of the cross-fit SLRT with different splitting ratios $m_0$ and different weights $w_0$ as well as the subsampling alternative with our proposed optimal splitting ratio $m_0=0.41$. As previously mentioned, under the assumption of fixed, equal weights $w_0=0.5$, the best-performing splitting ratio for the cross-fit SLRT is an even split $m_0=0.5$. However, for an uneven splitting ratio, we can emphasize the better-performing test statistic by adjusting the weights to achieve similar performance. Furthermore, Figure {\ref{factanalcross}} shows that using our proposed optimal splitting ratio for both test statistics in a subsampling procedure outperforms the competition in all settings.}

\section{Conclusion}
\label{discussion}

The split likelihood ratio test (SLRT) is a flexible tool that provides valid level $\alpha$ tests in finite samples even when classical regularity conditions are not satisfied. The underlying universal approach of splitting the data allows one to conduct rather simple analyses even in complicated  settings. In general, this flexibility leads to a rather conservative method and, thus, it is of interest to carefully choose the splitting ratio in order to mitigate possible loss of power.  

In order to provide new insights about the performance of the SLRT we studied its asymptotic behavior in the setting of smooth hypotheses.  Our study gives rise to a new class of distributions, noncentral split chi-square distributions, that appear as limiting distributions of the SLRT.  The split chi-square distribution depends on the dimensions of both null and alternative hypotheses and not only the difference of the dimensions.  Naturally, it also depends on the chosen  data splitting ratio.   Using the new class of distributions, we analyzed the power of the SLRT in extensive simulations, and we proposed a new routine for calculating the optimal splitting ratio for the SLRT that significantly boosts power, especially in higher dimensions. 

\begin{acks}[Acknowledgments]
This project has received funding from the European Research Council (ERC) under the European Union's Horizon 2020 research and innovation programme (grant agreement No.~83818). Further, this work has been funded by the German Federal Ministry of Education and Research and the Bavarian State Ministry for Science and the Arts. The authors of this work take full responsibility for its content.

\end{acks}


\bibliographystyle{imsart-nameyear} 
\bibliography{universalinference.bib}       


\end{document}